\theoremstyle{plain}
\theoremstyle{definition}
\newtheorem{theorem}{Theorem}[section] 
\newtheorem{proposition}[theorem]{Proposition}
\newtheorem{remark}[theorem]{Remark}
\begin{document}
  \title[Nonsingular transformations and Dimension spaces ]{Nonsingular transformations and Dimension spaces }
  \author{Thierry Giordano}
  \address{T. Giordano, Department of Mathematics, University of Ottawa, 585 King Edward
  Ottawa, Canada K1N 6N5  }\email{giordano@uottawa.ca}
  \author{David Handelman}
    \address{D. Handelman, Department of Mathematics, University of Ottawa, 585 King Edward
    Ottawa, Canada K1N 6N5  }\email{dehsg@uottawa.ca}
  \author{Radu B. Munteanu}
  \address{R.-B. Munteanu, Department of Mathematics, University of Bucharest, 14 Academiei St.,  Sector 1, Bucharest,
  Romania} \email{radu-bogdan.munteanu@g.unibuc.ro}
  \thanks{The first and second author were partly supported by NSERC(Canada) operating grants and the third author was partly supported by by Grant PN-II-RU-PD-2012-3-0533 of the Romanian National Authority for Scientific Research, CNCS - UEFISCDI}

\maketitle
\begin{abstract}
For any adic transformation $T$ defined on the path space $X$ of an ordered Bratteli diagram, endowed with a Markov measure $\mu$, we construct an explicit dimension space (which corresponds to a matrix values random walk on $\mathbb{Z}$) whose Poisson boundary can be identified as a $\mathbb{Z}$-space with the dynamical system $(X,\mu,T)$. We give a couple of examples to show how dimension spaces can be used in the  study of nonsingular transformations.
\end{abstract}
\section{Introduction}

 The aim of this paper is to show how dimension spaces can be used in the study of nonsingular transformations.
 Let $(X,\mu)$ be a measure space and let $T$ be a nonsingular automorphism of $(X,\mu)$. Clearly, $x\mapsto T^nx, x\in X, n\in\mathbb{Z}$ is an action of $\mathbb{Z}$ on $X$, and so $(X,\mu)$ is a $\mathbb{Z}$-space. Throughout this paper a triple of the form  $(X,\mu,T)$ will be called a
 dynamical system.

 In [GH], Giordano and Handelman constructed an invariant for classification of ergodic dynamical systems  called dimension space. Since any dynamical system $(X,\mu, T)$ is the Poisson boundary of a certain matrix values random walk on $\mathbb{Z}$ (see \cite{EG}) and the Poisson boundary of a matrix values random walk on $\mathbb{Z}$ can be identified with the Poisson boundary of a $\mathbb{Z}$-dimension space (see \cite{GH}), any dynamical system can be regarded as the Poisson boundary of a certain $\mathbb{Z}$-dimension space. Even though to any dynamical system $(X,\mu,T)$ it corresponds a $\mathbb{Z}$-dimension space, no explicit construction was previously known. In this paper we show that if $T$ is an ergodic nonsingular transformation on a measure space $(X,\mu)$ which admits an adic realization (that is, $T$ is measure theoretically conjugate to an adic transformation on an ordered Bratteli diagram endowed with a Markov measure) then to the dynamical system $(X,\mu,T)$  we can associate a $\mathbb{Z}$-dimension space $H$, in an explicit way, such that $(X,\mu,T)$ is isomorphic to the Poisson boundary of $H$ (by this we mean that the two $\mathbb{Z}$-actions are measure theoretically conjugate)

 Let us briefly recall the definition of a dimension space. Throughout this paper we consider only $\mathbb{Z}$-dimension spaces. Let $\mathbb{Z}$ be the group of integers and let $A = \mathbb{RZ}$ be the group algebra which we view as a subalgebra of $l^{1}(\mathbb{Z})$ with respect to the counting measure. We identify $A$ with the Laurent polynomial algebra in one real variable $A = R[x^{\pm 1}]$. For a Laurent polynomial $f\in A$, the coefficient of $x^{k}$, will be denoted by $(f, x^{k})$. There is a natural ordering on $A$, with positive cone
  \[A^{+}=\left\{\sum r_{n}x^{n} | \ r_{n}=0 \text{ a.e., }r_{n}\geq 0\right\}.\]
  Let $k(n)$ be positive integers and let $A^{k(n)}$ be the set of columns of size $k(n)$. Each $A^{k(n)}$ is a partially ordered vector space with the direct sum ordering. Let $(M_{n})_{n\geq 1}$ be
  a sequence of positive $A$-module maps $M_{n} : A^{k(n)}\rightarrow A^{k(n+1)}$; in other words, $M_{n}$ is a $k(n + 1)\times k(n)$ matrix with entries $\varphi^{n}_{i,j}$
  from $A^{+}$. Let $H$ be the direct limit
  \[H=\underset{n\rightarrow \infty}\lim A^{k(1)}\overset{M_{1}}\longrightarrow A^{k(2)}\overset{M_{2}}\longrightarrow A^{k(3)}\overset{M_{3}}\longrightarrow\cdots .\]
  The limit $H$ consists of the set of pairs $\left\{(f,n); n\in \mathbb{N}, b\in A^{k(n)}\right\},$ modulo the equivalence relation
  \[(f, n)\sim (g,m)\text{ if there exists }l>n, m\text{ such that } M_{l-1}M_{l-2}\cdots M_{n}f= M_{l=1}M_{l-2}\cdots M_{m}g.\] The equivalence class of a pair $(f, n)$ with $n\in\mathbb{N}$ and $f\in A^{k(n)}$ will be denoted by $[f,n]$.

  The set $H^{+}$ of equivalence classes that contain a positive element is the positive cone for $H$. Now $\mathbb{Z}$ acts on $A$ via multiplication by $x$, and obviously this action extends to an action on each $A^{k(n)}$, and thereby to the limit space, $H$, via $[f, n] \mapsto[f\cdot x, n]$. Notice that matrix multiplication acts on the left, commutes with the right multiplication
  action of $\mathbb{Z}$ and thus of $A$. So $H$ admits the structure of a partially ordered real vector
  space, a dimension group, and an ordered $A$-module.

  We will assume that for $n>1$, the
  sum of each column of $M_{n}(1)$ are $1$. Let us introduce now the following notation:
  for $n > 1$, let $S_{n} = \mathbb{Z}\times \{1,2,\ldots,n\}$ denote the disjoint union of $n$ copies of $\mathbb{Z}$. The $k(n + 1)\times k(n)$-matrix $M_{n}$ defines a transition probability $P^{n}_{n+1}$ from $S_{k(n)}$ to $S_{k(n+1)}$ given by
  \[P^{n}_{n+1}((m,i),(p,j))=(\varphi^{n}_{j,i},x^{p-m})\text{ for }(m,i)\in S_{k(n)}\text{ and }(p,j)\in S_{k(n+1)}.\]
  By the group-invariant matrix-valued random walk on $\mathbb{Z}$ associated with the sequence $(M_{n})_{n\geq 1}$ is meant the Markov process consisting of the sequence of measurable spaces $(S_{k(n)})_{n\geq 1}$ with the transition probabilities $(P_{n+1}^{n})_{n\geq 1}$.

  Let $\psi=(\psi_{n})_{n\geq 1}$ be a bounded harmonic function, that is $\psi$ is an element of the von Neumann algebra $\oplus_{k\geq 1}l^{\infty}(S_{n(k)})$ satisfying
  \[\psi_{n}(x)=\sum_{y\in S_{n(k+1)}}P_{n+1}^{n}(x,y)\psi_{n}(y)\text{ for all }x\in S_{k(n)}.\]
  We assume that $\psi$ is $\mathbb{Z}$-invariant. Hence $\psi=(\psi_n)_{n\geq 1}$ is completely determined by a sequence of row vectors $\mu_{n}=(\mu_{n}^{1},\mu_{n}^{2},\ldots, \mu_{n}^{k(n)})\in \mathbb{R}^{k(n)}$ which are compatible with the maps $M_{n}(1)$ in the sense that $\mu_{n+1}M_{n}(1)=\mu_{n}$ and $\psi_n(k,i)=\mu_{n}^i$ for all $k\in\mathbb{Z}$. We denote by $\mathcal{H}_{\infty}^{+}$ the space of all bounded harmonic functions. To each $\psi\in \mathcal{H}_{\infty}^{+}$ it corresponds a bounded state $\gamma_\psi$ on $H$. This is a positive linear form on $(H,H^{+})$, defined  by
  $$\gamma_{\psi}([f,n])=\sum_{i=1}^{k(n)}\sum_{k\in \mathbb{Z}}(f_i,x^k)\mu_{n}^{i}, \text{ where } f=(f_1,f_2,\ldots, f_{k(n)})^{T}\in A^{k(n)}.$$
  Conversely, each bounded state of $H$ is of the form $\gamma_\psi$ for some bounded harmonic function $\psi\in\mathcal{H}_{\infty}^{+}$.  Moreover a bounded state $\gamma$ is $\mathbb{Z}$-invariant if and only if the corresponding bounded harmonic function $\psi$ is $\mathbb{Z}$-invariant.
  \noindent If  $f=(f_1, f_2, \ldots, f_{n(k)})^{T}$ is an element of $A^{k(n)}$ we put
  $$\|f\|_{\psi_n}=\sum_{i=1}^{k(n)}\sum_{k\in\mathbb{Z}}|(f_i,x^{k})|\psi_{n}(k,i).$$
  For $[f,n]\in H$ we define
  $$\|[f,n]\|_{\psi}=\liminf_{m\geq n}\|M_{m-1}M_{m-2}\cdots M_{n}f\|_{\psi_m}.$$
  In this paper we consider only the norm $\|\cdot\|$ corresponding to the state $\nu=(\nu_n)_{n\geq 1}$ where $\nu_n=(1,1,\ldots,1)\in \mathbb{R}^{k(n)}$.
  Let $E$ be the $\|\cdot\|$ completion of $H$ (i.e. the completion of $H/\{x\in H \|x\|_{\psi}=0\}$) with respect to this norm. Then there exist a standard measured $\mathbb{Z}$-space $(X,\mu)$ and an isometric order isomorphism $T : E\rightarrow L^{1}(X,\mu)$ which is also $\mathbb{Z}$-equivariant. The $\mathbb{Z}$-space $(X,\mu)$ is a realization of the Poisson boundary associated to the pair $H$  (see [GH]).
  Remark that the Poisson boundary of $H$ is isomorphic to the Poisson boundary of the group-invariant matrix-valued random walk on $\mathbb{Z}$ associated with the sequence $(M_{n})_{n\geq 1}$.

  The paper is organized as follows. In the second section we give a precise procedure by which to any nonsingular transformation $T$ that admits an adic realization, we associate a dimension space $H$. We will apply our procedure for some concrete examples.
  In the last section we constuct a countable family of finite measure preserving dynamical systems, to each of them we associate the corresponding dimension space and we prove that all these dimension spaces (and therefore the corresponding transformations) are AT. In order to prove this result we applied a criterion from \cite{GH}. Using the invariant $s$ introduced by Handelman in \cite{H2} we also obtain that these systems are mutually non isomorphic.

  \section{Dimension spaces associated to non-singular transformations}

  In order to associate a $\mathbb{Z}$-dimension space to a nonsingular automorphism of a measure space we will use the fact that the Mackey range of a cocycle defined on the tail equivalence relation of a Bratteli diagram  and taking values in $\mathbb{Z}$ can be identified with the Poisson boundary of a certain matrix valued random walk on $\mathbb{Z}$ (see [EG]).

  \begin{proposition}\label{mack}
  Let $T$ be a nonsingular automorphism of $(X, \mu)$ and denote by $\mathcal{R}$ the equivalence relation generated by $T$, that is $\mathcal{R} = \{(x, T ^{n}x), x \in X\}$. Let us consider the cocycle $c:\mathcal{R}\rightarrow\mathbb{Z}$, defined by
  $c(y, x) = n$, if $y = T^{n}x, n\in\mathbb{Z}$. Then $(X,\mu)$, which is a $\mathbb{Z}$-space together with the $\mathbb{Z}$-action  corresponding to $T$, is (a realization of) the Mackey range of $c$.
  \end{proposition}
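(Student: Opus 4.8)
The plan is to unwind the definition of the Mackey range and produce the required isomorphism by hand. Recall that the Mackey range of the cocycle $c\colon\mathcal{R}\to\mathbb{Z}$ is obtained from the skew product: on $X\times\mathbb{Z}$ one forms the equivalence relation $\mathcal{R}_c$ in which $(y,b)$ is equivalent to $(x,a)$ exactly when $(y,x)\in\mathcal{R}$ and $b-a=c(y,x)$; the group $\mathbb{Z}$ acts on $X\times\mathbb{Z}$ by translation in the second coordinate, this action commutes with $\mathcal{R}_c$ and permutes its classes, and the Mackey range is the standard Borel space of $\mathcal{R}_c$-ergodic components together with the induced $\mathbb{Z}$-action. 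So what has to be checked is that this quotient, with its induced action and measure class, is (isomorphic to) $(X,\mu)$ equipped with the $\mathbb{Z}$-action coming from $T$.

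First I would write down the candidate factor map $\pi\colon X\times\mathbb{Z}\to X$, $\pi(x,a)=T^{-a}x$. Since $c(T^nx,x)=n$, a one-line computation shows that $\pi$ is constant on every $\mathcal{R}_c$-class and that $\pi(k\cdot(x,a))=T^{-k}\pi(x,a)$, so $\pi$ intertwines the translation action with the $T$-action on $X$ (with the standard sign conventions in the definition of $\mathcal{R}_c$ this is the $T$-action itself; in any case it differs from it only by the automorphism $k\mapsto -k$ of $\mathbb{Z}$, which is immaterial for the statement). The map $\pi$ is plainly measurable, and because $T$ is nonsingular one checks that $\pi$ carries the measure class of $\mu\times(\text{counting measure})$ on $X\times\mathbb{Z}$ to the measure class of $\mu$ on $X$, which is what pins down the measure on the Mackey range.

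The crucial point is that the fibres of $\pi$ are \emph{precisely} the $\mathcal{R}_c$-classes. Indeed, for $x_0\in X$,
\[
\pi^{-1}(x_0)=\{(x,a)\in X\times\mathbb{Z} : T^{-a}x=x_0\}=\{(T^{a}x_0,a) : a\in\mathbb{Z}\},
\]
and the set on the right is exactly the $\mathcal{R}_c$-class of $(x_0,0)$ (this holds whether or not the $T$-action is free). Equivalently, $X\times\{0\}$ is a measurable transversal meeting every $\mathcal{R}_c$-class in a single point, so $\mathcal{R}_c$ is smooth and its space of ergodic components is genuinely $X\times\{0\}\cong X$, with no further decomposition required. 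Pulling back along $\pi$ then identifies the Boolean algebra of $\mathcal{R}_c$-invariant measurable sets (hence $L^\infty$ of the Mackey range) with that of $X$, $\mathbb{Z}$-equivariantly, which is the asserted isomorphism of measured $\mathbb{Z}$-spaces.

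I expect the only genuine obstacle to be bookkeeping rather than substance: fixing the sign conventions in the cocycle identity and in the skew product so that the induced action comes out as $T$ on the nose, and verifying the measure-class computation carefully so that the quotient is $(X,\mu)$ and not merely some measured space in the correct isomorphism class. The structural heart of the argument — that the tautological $\mathbb{Z}$-valued cocycle of a $\mathbb{Z}$-action has the original space as its Mackey range — is forced by the fibre identification above and requires nothing deeper.
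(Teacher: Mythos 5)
Your argument is correct and is essentially the paper's own proof: the paper works dually, defining an isomorphism $L^\infty(X\times\mathbb{Z},\mu\times p)^{\widetilde{\mathcal{R}}}\to L^\infty(X,\mu)$ by restriction to the transversal $X\times\{0\}$ (i.e.\ $f\mapsto f(\cdot,0)$, using $f(x,n)=f(T^nx,0)$), which is exactly the pullback inverse of your point map $\pi(x,a)=T^{-a}x$, and then checks equivariance just as you do. The one caveat is your parenthetical that the fibre identification holds ``whether or not the $T$-action is free'': the cocycle $c(y,x)=n$ for $y=T^nx$ is only well defined when $T$ is essentially free, so that is the only case in play and the remark should be dropped.
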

  \begin{proof}
  Let $p$ be a probability measure on $\mathbb{Z}$ equivalent to the counting measure and denote by $\nu$ the product measure $\mu\times p$ on $X\times\mathbb{Z}$. Consider $\widetilde{\mathcal{R}}$, the equivalence relation on $(X \times\mathbb{Z},\nu)$ defined by $((x, m),(y, n)) \in \widetilde{\mathcal{R}}$ if $(x, y)\in \mathcal{R}$ and $n = m + c(x, y)$. Notice that $\widetilde{\mathcal{R}}$ is generated by the automorphism $\widetilde{T}$ of $(X\times\mathbb{Z},\nu)$, deined by \[\widetilde{T}(x, m) = (T x, m + c(x, T x)) = (T x, m-1)\text{ for }(x, m)\in X\times\mathbb{Z}.\]
  Let $\alpha$ be the action of $\mathbb{Z}$ on $L^{\infty}(X\times\mathbb{Z}, \mu\times p)$ given by $\alpha_{n}(f)(x, m) = f(x, m-n)$. Denote by $L^{\infty}(X\times\mathbb{Z}, \mu\times p)^{\widetilde{\mathcal{R}}}$ the space of all essentially $\widetilde{\mathcal{R}}$-invariant functions of  $L^{\infty}(X\times\mathbb{Z}, \mu\times p)$.
  Then, the Mackey range of $c$ is a point realization
  of the restriction of the action $\alpha$ to $L^{\infty}(X\times\mathbb{Z}, \mu\times p)^{\widetilde{\mathcal{R}}}$.
  Let
  \[\pi :L^{\infty}(X\times\mathbb{Z}, \mu\times p)^{\widetilde{\mathcal{R}}}\rightarrow L^{\infty}(X,\mu) \]
  defined by $\pi(f)(x) = f(x, 0)$, for $x\in X$. Since any essentially invariant function is
  almost everywhere equal to an invariant one we can assume that $f$ is $\widetilde{\mathcal{R}}$-invariant. Clearly,
  $f(x, n) = f(T^{n}x, 0) = \pi(f)(T^{n}x)$, for $n\in \mathbb{Z}$. It then follows that $\pi$ is an isomorphism. Let $\beta$ be the action of $\mathbb{Z}$ on $L^{\infty}(X,\mu)$ which corresponds to the action $\alpha$ via the isomorphism $\pi$. It is defined by $\beta_{n} = \pi\circ\alpha_{n}\circ\pi^{-1}$ for
  $n \in \mathbb{Z}$. It is a routine to check that $\beta_{n}(g)=g\circ T^{-n}$ for $g\in L^{\infty}(X,\mu)$ and $n\in\mathbb{Z}$, and then the proposition follows.
  \end{proof}

  \bigskip
  A Bratteli diagram $B=(V,E)$ consists of a vertex set $V$ and
  an edge set $E$, where $V$ and $E$ can be written as a countable
  disjoint union of nonempty finite sets, $V = V_{0}\cup V_{1}\cup
  V_{2}\cup \cdots$ and $E = E_{0}\cup E_{1}\cup\cdots$ with the
  following property: an edge $e$ in $E_{n}$ goes from a vertex in
  $V_{n}$ to one in $V_{n+1}$, which we denote by $s(e)$ and $r(e)$,
  respectively. We call $s$ the source map and $r$ the range map. We
  assume that $s^{-1}(v)$ is nonempty for each $v\in V$ and that
  $r^{-1}(v)$ is nonempty for each $v\in V\setminus V_{0}$. For
  simplicity, we assume that $V_{0}=\{v_{0}\}$ is a singleton.
  Let $B=(V,E)$ be a Bratteli diagram as above.
  For any vertex $v\notin V_{0}$ we totally order the set $r^{-1}(v)$ of edges reaching $v$
  (for example from left to the right). Then $B=(V,E)$ together with this order is called an ordered Bratteli diagram. Denote by $X_{n}$ the space
  of paths of length $n$
  \[X_{n}=\{(e_{0},e_{1},\ldots e_{n}), e_{k}\in E_{k},0\leqslant k\leqslant n \text{ and } r(e_{k})=s(e_{k+1})
  \text{ for }0\leqslant k\leqslant n-1\}.\] and $X$ the set of
  infinite paths
  \[X=\{\mathfrak{e}=(e_{0},e_{1},\ldots e_{k},\ldots ), e_{k}\in E_{k},\ r(e_{k})=s(e_{k+1})
  \text{ for }k \geq 0\}.\] On the paths space $X$ we consider the $\sigma$-algebra generated by sets of the form
  \[Z_{(f_{0},f_{1},\ldots f_{n})}=\{e\in X| \ e_{k}=f_{k}\text{ for } 0\leq k\leq
   n\}.\]These sets will be called cylinders.
  Let $\mu$ be a Markov measure (or AF measure) on $X$; it is determined by a
  system of transition probabilities $p$ (i.e. maps $p : E \rightarrow
  [0, 1]$ with $p(e)>0$ and $\sum_{\{e\in E, s(e)=v\}}p(e)=1$ for
  every $v\in V$) given by
  \[\mu(Z_{(f_{0},f_{1},\ldots f_{n})})=\prod_{k=0}^{n}p(f_{k})\]
  for each cylinder $Z_{(f_{0},f_{1},\ldots f_{n})}$.

  We define on $X$ a partial order by setting $\mathfrak{e}<\mathfrak{f}$ if there exists an integer $n\geq
  0$ such that $e_{n}<f_{n}$ and $e_{k}=f_{k}$, for $k>n$. Let
  $X_{\max}$ and $X_{\min}$ be the sets of the
  maximal and minimal paths, respectively. We define the adic (or Vershik) transformation
  $T :X\setminus X_{\max}\rightarrow X\setminus X_{\min}$ by setting $T\mathfrak{e}$ to be the smallest $\mathfrak{f}$ satisfying $\mathfrak{f}>\mathfrak{e}$. We assume that $X_{\max}$ and $X_{\min}$ have measure zero and so $T$ is nonsingular. Note that the equivalence relation on $X$, generated by  $T$ is, up to a null set, the tail equivalence relation $R$ on $X$.
  Given $b : E\rightarrow\mathbb{Z}$ we can define a cocycle $c:\mathcal{R}\rightarrow\mathbb{Z}$ by setting
  \[c(\mathfrak{f},\mathfrak{e})=\sum_{n\geq 0}(b(f_{n})-b(e_{n}))\]
  whenever $(\mathfrak{e},\mathfrak{f})\in\mathcal{R}$. We want to find a function $b : E\rightarrow\mathbb{Z}$ such
  that the corresponding cocycle $c$ satisfies $c(\mathfrak{f},\mathfrak{e}) = 1$ if $\mathfrak{f}$ is the successor of $\mathfrak{e}$ (equivalently
  $\mathfrak{f} = T(\mathfrak{e})$). We will define $b$ inductively, as follows.

  For every $e\in  E_{0}$, we put $b(e) = 1$. Having defined $b(e)$, for all $e \in E_0\cup E_1\cup E_{2}\cup\cdots \cup E_{n}$,
  we define $b(e)$ for $e\in E_{n+1}$ in the following way. For every vertex $v\in V_{n+1}$ let $p(v)$ be the number of edges reaching $v$.
  We write this set as $\{e^{1}<e^{2}<\cdots< e^{p(v)}\}$. Define
  \begin{align*}
  & b(e^{1})=0\\
  & b(e^{2})=\max\{b(e_{0})+b(e_{1})+\cdots +b(e_{n}) :  (e_{0},e_{1},\ldots, e_{n})\in X_{n}\text{ and }r(e_{n})=s(e^{1})\}+b(e^{1})+1\\
  &b(e^{3})=\max\{b(e_{0})+b(e_{1})+\cdots +b(e_{n}):  (e_{0},e_{1},\ldots, e_{n})\in X_{n}\text{ and }r(e_{n})=s(e^{2})\}+b(e^{2})+1\\
  &\cdots\cdots\cdots\cdots\cdots\cdots\cdots\cdots\cdots\cdots\cdots\cdots\cdots\cdots\cdots\cdots\cdots\cdots\cdots\cdots\cdots\cdots\cdots\cdots\cdots\cdots\cdots\\
  & b(e^{p(v)})=\max\{b(e_{0})+\cdots +b(e_{n}):  (e_{0},e_{1},\ldots, e_{n})\in X_{n}, \ r(e_{n})=s(e^{p(v)-1})\}+b(e^{p(v)-1})+1
  \end{align*}
    or equivalently,
  \begin{align*}
  & b(e^{1})=0\\
  & b(e^{2})=\max\{b(e_{0})+b(e_{1})+\cdots +b(e_{n})+ b(e^{1})+1:  (e_{0},e_{1},\ldots, e_{n},e^{1})\in X_{n+1}\}\\
  & b(e^{3})=\max\{b(e_{0})+b(e_{1})+\cdots +b(e_{n})+b(e^2)+1:  (e_{0},e_{1},\ldots, e_{n},e^{i})\in X_{n+1}\}\\
  &\cdots\cdots\cdots\cdots\cdots\cdots\cdots\cdots\cdots\cdots\cdots\cdots\cdots\cdots\cdots\cdots\cdots\cdots\cdots\cdots\cdots\cdots\cdots\cdots\cdots\cdots\cdots\\
  & b(e^{p(v)})=\max\{b(e_{0})+b(e_{1})+\cdots +b(e_{n})+b(e^{p(v)-1})+1:  (e_{0},e_{1},\ldots, e_{n},e^{p(v)})\in X_{n+1} \} \ \ \ \ \ \ \ \ \ \ \ \
  \end{align*}
  Then the cocycle corresponding to $b$ satisfies the conditions of Proposition \ref{mack}, and
  therefore, the space $(X,\mu)$ together with the $\mathbb{Z}$-action induced by the transformation $T$
  is a realization of the Mackey range of $c$.

  For $n\geqslant 0$, write $V_{n}=\{v_{i}^{n}, 1\leqslant i \leqslant k(n)\}$, where $k(n)=|V_{n}|$. Let us denote by $A_n=(a^n_{i,j})$ the incidence matrices corresponding to the above Bratteli diagram. Hence $a^n_{i,j}>0$ if there exists at least an edge connecting $v^{n}_{j}$ and $v^{n+1}_{i}$ and $a^n_{i,j}=0$ otherwise.

  Consider the matrix valued random walk defined by the sequence $(\sigma_{n})_{n\geqslant 1}$ of $k(n+1)\times k(n)$ matrices of measures, where $\sigma_{n,i,j}=0$ if $a^{n}_{ij}=0$ and $$\sigma_{n,i,j}=\sum_{s(e)=v^n_j, r(e)=v^{n+1}_i}p(e)\delta_{b(e)}$$ if $a^{n}_{i,j}\neq 0$ (by $\delta_{x}$ we denote the Dirac measure at $x$).
Then (see [EG], Remark 4.2) the Poisson boundary of the matrix valued random walk corresponding to $\{\sigma_n\}_{n\geq 1}$ is isomorphic, as a $\mathbb{Z}$-space, to the Mackey range of the cocycle $c$.

Then, following [GH] and Proposition \ref{mack} we have:

\begin{theorem}\label{reteta}
With the above notation, the dynamical system $(X,\mu,T)$ is isomorphic to the Poisson boundary of the dimension space corresponding to the sequence $(M_{n})$ of $k(n+1)\times k(n)$ matrices with entries  $M_{i,j}^n\in\mathbb{R}[x^{\pm 1}]$  given by $$M^n_{i,j}=\begin{cases}
\sum\limits_{s(e)=v^n_j, r(e)=v^{n+1}_i}p(e)x^{b(e)}&\text{ if } a^{n}_{i,j}\neq 0\\
 \ \ \ \ \ \ \ \ \ \ \ 0 &\text{ otherwise  }
\end{cases} $$
\end{theorem}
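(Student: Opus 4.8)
The plan is to obtain Theorem \ref{reteta} by concatenating the three identifications assembled above, so that the proof reduces to checking that the hypotheses of each link are in place. First I would record that, by the inductive construction of $b\colon E\to\mathbb{Z}$ carried out before the statement, the cocycle $c(\mathfrak{f},\mathfrak{e})=\sum_{n\geq 0}(b(f_n)-b(e_n))$ on the tail equivalence relation $R$ satisfies $c(T\mathfrak{e},\mathfrak{e})=1$; since $X_{\max}$ and $X_{\min}$ are null, $R$ agrees up to a null set with the equivalence relation $\mathcal{R}$ generated by $T$, and the cocycle identity then forces $c(T^{n}\mathfrak{e},\mathfrak{e})=n$ for every $n\in\mathbb{Z}$. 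Hence $c$ is, almost everywhere, the cocycle of Proposition \ref{mack}, and that proposition identifies $(X,\mu,T)$, as a $\mathbb{Z}$-space, with the Mackey range of $c$.

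Next I would invoke [EG], Remark 4.2: the measures $\sigma_{n,i,j}=\sum_{s(e)=v^{n}_{j},\,r(e)=v^{n+1}_{i}}p(e)\,\delta_{b(e)}$ are exactly the one-step distributions of the matrix-valued random walk on $\mathbb{Z}$ attached to the Bratteli diagram together with the cocycle $c$, so its Poisson boundary is isomorphic, as a $\mathbb{Z}$-space, to the Mackey range of $c$. Combined with the previous step this gives a $\mathbb{Z}$-space isomorphism between $(X,\mu,T)$ and the Poisson boundary of the random walk $(\sigma_n)_{n\geq 1}$.

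It then remains to pass from the random walk $(\sigma_n)$ to the dimension space $H$ built from the matrices $(M_n)$ of the statement. Here I would use the dictionary identifying a finitely supported measure $\sum_{k}\alpha_{k}\delta_{k}$ on $\mathbb{Z}$ with the Laurent polynomial $\sum_{k}\alpha_{k}x^{k}\in A=\mathbb{R}[x^{\pm1}]$; under it the entry $\sigma_{n,i,j}$ corresponds precisely to $M^{n}_{i,j}$, so each $M^{n}_{i,j}$ lies in $A^{+}$, and for $n>1$ the column sums of $M_{n}(1)$ are $\sum_{i}\sum_{s(e)=v^{n}_{j},\,r(e)=v^{n+1}_{i}}p(e)=\sum_{s(e)=v^{n}_{j}}p(e)=1$, so $(M_n)$ does define a $\mathbb{Z}$-dimension space in the sense recalled in the Introduction. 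Moreover the transition probabilities $P^{n}_{n+1}((m,i),(p,j))=(M^{n}_{j,i},x^{p-m})$ read off from $M_n$ are, under the same dictionary, exactly those of the random walk $(\sigma_n)$, so the two Markov processes --- and hence their Poisson boundaries --- coincide; and by [GH] the Poisson boundary of $H$ is by definition the Poisson boundary of the associated group-invariant matrix-valued random walk. Stringing the three isomorphisms together produces a measure-theoretic conjugacy of $\mathbb{Z}$-actions between $(X,\mu,T)$ and the Poisson boundary of $H$, which is the claim.

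As for where the difficulty lies: none of these three steps is hard in itself, because the genuine work was done earlier --- in constructing $b$ so that Proposition \ref{mack} applies, and in the cited theorems of [EG] and [GH]. The one point that needs care is the index bookkeeping: $M_n$ is a $k(n+1)\times k(n)$ matrix acting on the left on columns, while the random walk runs forward from level $n$ to level $n+1$, which is why the $(j,i)$ entry rather than the $(i,j)$ entry appears in $P^{n}_{n+1}$. Keeping this transpose convention consistent between the $\sigma_n$-picture and the $M_n$-picture is the place where a careless argument would go astray.
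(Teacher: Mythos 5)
Your proposal is correct and follows essentially the same route as the paper, which presents Theorem \ref{reteta} as the direct concatenation of Proposition \ref{mack}, the identification from [EG, Remark 4.2] of the Mackey range of $c$ with the Poisson boundary of the random walk $(\sigma_n)$, and the [GH] identification of that boundary with the Poisson boundary of the dimension space $(M_n)$. Your added care about the column-sum normalization and the $(j,i)$ versus $(i,j)$ indexing is a sensible explicit check that the paper leaves implicit.
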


This procedure can be used in concrete situations, as we will explain bellow for the case of the dyadic odometer, the Morse automorphism and the irrational rotation of the circle.

 \subsection{ The dyadic odometer.} Let $B=(V,E)$ be the Bratteli diagram of the dyadic odometer. It has  one vertex $v_n$ at each level and there are exactly two edges $e^n_0$ and $e^n_1$ connecting $v^n$ and $v^{n+1}$. Hence $V=\cup_{n\geq 0}\{v_n\}$ is the set of vertices and $E=\cup_{n\geq 0} E_n$ with $E_n=\{e^n_0, e^n_1\}$ is the set of edges. We have a total order on the set of edges $E_n$ reaching the vertex $v_{n+1}$, given by $e^{n}_{0}<e^n_1$. Consider the measure $\mu$ corresponding to the system $p:E\rightarrow[0,1]$ of transition probabilities defined by $p(e)=\frac{1}{2}$.  As in the general case, we have a partial order on the path space $X$: for two paths $\mathfrak{e}$ and $\mathfrak{f}$ which are tail equivalent, we have $\mathfrak{e}<\mathfrak{f}$ if there exists an integer $n\geq 0$ such that $e_{n}<f_{n}$ and $e_{k}=f_{k}$, for $k>n$. Then the dyadic odometer is the adic transformation $T:X\rightarrow X$ defined by $T(\mathfrak{e})=\mathfrak{f}$, where $\mathfrak{f}$ is the successor of $\mathfrak{e}$.

We define the map $b:E\rightarrow\mathbb{Z}$. For the edges in $E_0$ define $$b(e^0_0)=0, \   b(e^0_1)=1.$$
For the edges in $E_1$ let
$$b(e^1_0)=0,\ b(e^1_0)=\max\{b(e^0_0), b(e^0_1)\}+1=2, $$ and for the edges of $E_2$, define
$$b(e^2_0)=0,\ b(e^2_0)=\max\{b(e^0_0)+b(e^1_0),b(e^0_0)+b(e^1_0),b(e^0_0)+b(e^1_0),b(e^0_0)+b(e^1_0)\}+1=4$$
In general, we have
$$b(e^n_0)=0, \ \ b(e^n_1)=2^{n-1}.$$
Then by Theorem \ref{reteta}, it results that the system $(X,\mu,T)$ is isomorphic as $\mathbb{Z}$-space to the Poisson boundary of the dimension space corresponding to $(P_n)$, where
$$P_n=\frac{1}{2}(1+x^{2^n}), \ \ n\geq 0.$$

\subsection{The Morse automorphism.} Let $V_0=\{v^0_0\}$ and $E_0=\{e^0_{0,0} e^0_{0,1}\}$. For $n\geq 1$, let $V_{n}=\{v^{n}_{0},v^{n}_{1}\}$ and $E_{n}=\{e^{n}_{0,0}, e^{n}_{0,1}, e^{n}_{1,0}, e^{n}_{1,1} \}$. Consider the Bratteli diagram $B=(V,E)$, where  $V=V_{0}\cup V_{1}\cup\cdots$ is the set of vertices, $E=E_{0}\cup E_{1}\cup\cdots$ is the set of edges and the edge $e^{n}_{i,j}$ connects the vertices $v^n_i$ and $v^{n+1}_j$.

  \begin{figure}
  \includegraphics[height=4in]{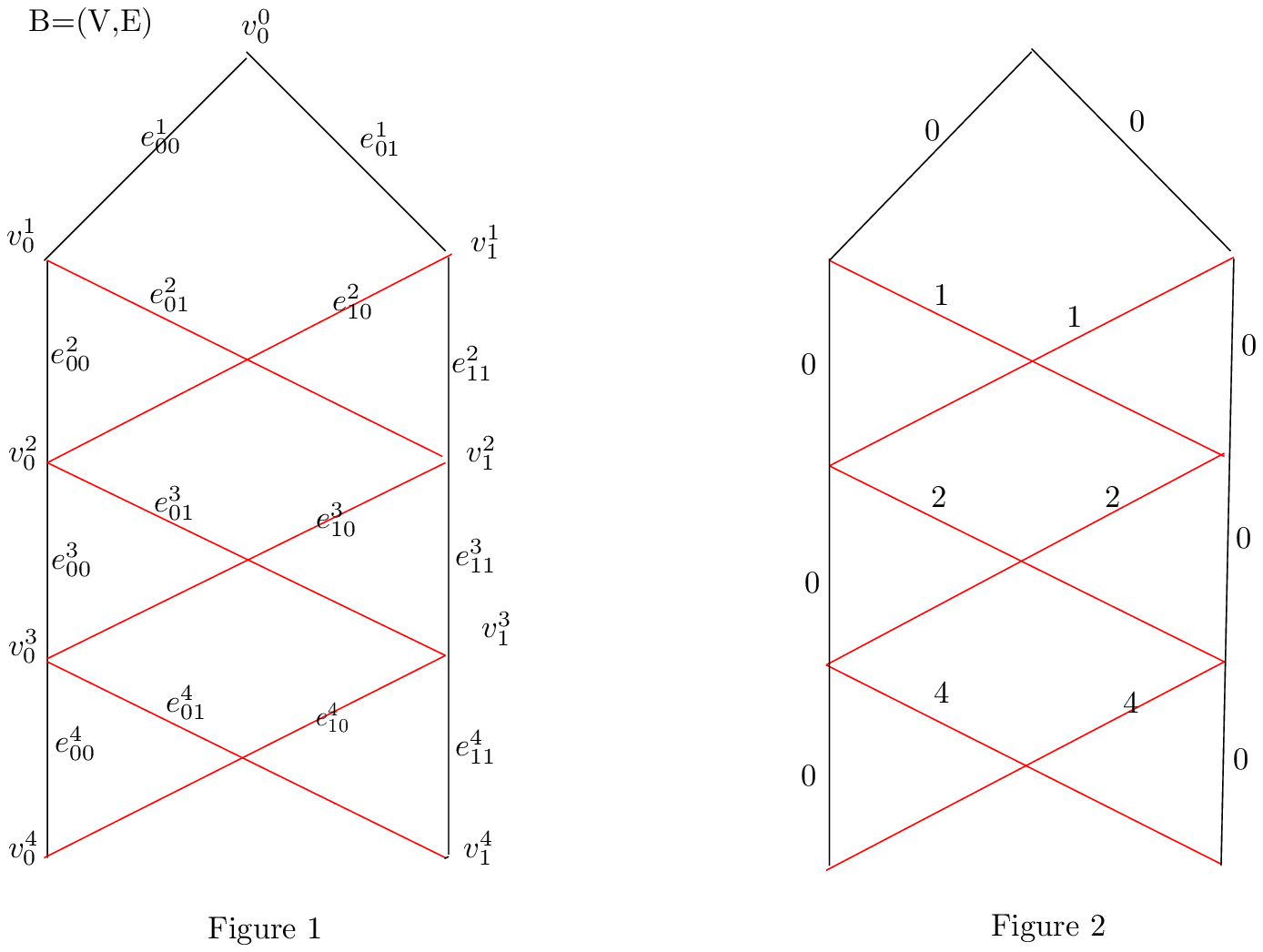}
  \end{figure}
  For $n\geqslant 1$, we order the set of edges reaching $v^{n+1}_{0}$ by $e^{n}_{0,0}<e^{n}_{1,0}$, and we order the set of edges reaching $v^{n+1}_{1}$ by $e^{n}_{1,1}<e^{n}_{0,1}$. We can then define a partial order on the path space $X$ as follows: if $\mathfrak{e}=(e_0,e_1,\ldots)$ and $\mathfrak{f}=(f_1,f_2,\ldots)$ are two paths which are tail equivalent we write $\mathfrak{e}<\mathfrak{f}$ if there exists an integer $n\geq 0$ such that $e_{n}<f_{n}$, $e_{k}=f_{k}$, for $k>n$.

  Denote by $\mu$ the Markov measure corresponding to the system  of transition probabilities $p:E\rightarrow [0,1]$, $p(e)=\frac{1}{2}$, for all $e\in E$. Then, the corresponding adic transformation (see Figure 1 and Figure 2) is (conjugate with) the Morse automorphism.

  We proceed now to define the function $b:E\rightarrow \mathbb{Z}$ as follows. For the edges in $E_{0}$ let
  \[b(e^{0}_{0,0})=b(e^{0}_{0,1})=0.\]
  For the edges in $E_{1}$ define
  \begin{align*}
  &b(e^{1}_{0,0})=b(e^{1}_{1,1})=0,\\
  &b(e^{1}_{1,0})=\max\{b(e^{1}_{0,0})\}+b(e^{1}_{0,0})+1=0+0+1=1\\
  &b(e^{1}_{0,1})=\max\{b(e^{1}_{0,1})\}+b(e^{1}_{1,1})+1=0+0+1=1.
  \end{align*}
  For the edges in $E_{2}$ we set
  \begin{align*}
 & b(e^{2}_{0,0})=b(e^{2}_{1,1})=0,\\
   & b(e^{2}_{1,0})=\max\{b(e^{0}_{0,0})+b(e^{1}_{0,0}),b(e^{0}_{0,1})+b(e^{1}_{1,0})\}+b(e^{2}_{0,0})+1=\max\{0,1\}+0+1=2\\    & b(e^{2}_{0,1})=\max\{b(e^{0}_{0,1})+b(e^{1}_{1,1}),b(e^{0}_{0,0})+b(e^{1}_{0,1})\}+b(e^{2}_{1,1})+1=\max\{0,1\}+0+1=2.
  \end{align*}
  Then following our general discussion, it follows that $(X,\mu)$ together with the $\mathbb{Z}$ action given by $X\mapsto T^nx$, $x\in X$ is isomorphic as $\mathbb{Z}$-space with the Poisson boundary of the $\mathbb{Z}$-dimension space corresponding to $(M_n)$, where  
     \[M_n=\frac{1}{2}\left[\begin{array}{cc}
      1 & x^{2^{n}} \\
      x^{2^{n}} & 1
       \end{array}
       \right], \ \   n\geqslant 0.\]

\subsection{Realization of the irrational rotation by cutting and stacking construction}

Even though any rank one transformation can be obtained by the cutting and stacking method there exist rank one automorphisms for which no such explicit realization is known. It is the case of the irrational rotation. We will show that if $\alpha$ is an irrational number whose continued fraction expansion
has certain properties then the corresponding rotation can be realized explicitly by the cutting and stacking method. Let $\alpha$ be an irrational number between $0$ and $1$ with continued fraction expansion
$$\alpha=\frac{1}{a(1)+\frac{1}{a(2)+\frac{1}{a(3)+\cdots}}}$$
Let $$\frac{p(n)}{q(n)}=\frac{1}{a(1)+\frac{1}{a(2)+\frac{1}{\cdots+\frac{1}{a(n)}}}}, \quad  n\geq 1$$ the successive approximations of $\alpha$. Let $p(0)=0$ and $q(0)=1$. Denote $$\alpha(n)=q(n)\left|\frac{p(n)}{q(n)}-\alpha\right|, \quad n\geq 0.$$
We construct an ordered Bratteli diagram $B=(V,E)$ as follows. Let $V=\cup_{n\geq 0}V_{n}$, with $V_{0}=\{v^{0}_{1}\}$ and $V_{n}=\{v^{n}_{1}, v^{n}_{2}\}$ be the vertex set. The set $E_{0}$ of edges connecting the set of vertices $V_{0}$ and $V_{1}$ consists of $a_{1}$ edges $\{e^0_{1,1}(k), i=1,\ldots a_{1}\}$ from $v^{0}_{1}$  to $v^{1}_{1}$ and one edge, denoted by $e^0_{1,2}$ from $v^{0}_{1}$ to $v^{1}_{2}$. In general the set of edges connecting $V_{n}$ and $V_{n+1}$ is $E_{n}=\{e^n_{2,1}, e^n_{1,2}, e^n_{1,1}(k), k=1,2,\ldots a(n+1) \}$ where the subscripts $i,j$ indicates that an edge connects the vertices  $v^{n}_{i}$ and $v^{n+1}_{j}$. Let $\mu$ pe the AF-measure defined by the the system $p:E\rightarrow[0,1]$, where
$$ p(e^0_{1,1}(k))=\alpha(0)=\alpha,\ k=1,2,\ldots, a(1), \ \  \ p(e^0_{1,2})=\alpha(1).$$
and for an arbitrary $n\geq 1$,
 $$p(e^n_{1,1}(k))=\frac{\alpha(n)}{\alpha(n-1)},\ k=1,2,\ldots, a(n+1) ; \quad p(e^n_{1,2})=\frac{\alpha(n+1)}{\alpha(n-1)}, \ \ p(e^n_{2,1})=1.$$
For every set of edges reaching a given vertex take the order from left to the right. Then, the adic transformation $T$ that corresponds to this order is measure theoretical isomorphic (conjugate) to $R_\alpha$, the irrational rotation of angle $\alpha$.
Define now $b:E_{n}\rightarrow\mathbb{Z}$ by
$$b(e^n_{1,2})=0, \
n\geq 0\ \ \ \ \ \ \  b(e^n_{2,1})=a(n+1)q(n), n\geq 1$$ and
$$b(e^n_{1,1}(k))=(k-1)q(n), \ \ \ \  k=1,2,\ldots a(n+1), \ n\geq 0$$
Then, by Theorem \ref{reteta}, it follows that the  $(X,\mu, T)$ is isomorphic, as a $\mathbb{Z}$-space, to the Poisson boundary of the dimension space corresponding to the sequence of matrices $(M_n)$, where
\[M_{0}=\left[\begin{array}{c}
\alpha(1+x+x^{2}+\cdots+x^{a(1)-1})\\\alpha(1) \end{array}\right] \]
and
\[M_{n}=\left[\begin{array}{cc}
  \frac{\alpha(n)}{\alpha(n-1)}\left(1+x^{q(n)}+x^{2q(n)}+\cdots +x^{(a(n+1)-1)q(n)}\right)& x^{a(n+1)q(n)} \\
   \frac{\alpha(n+1)}{\alpha(n-1)} & 0
     \end{array}
     \right], n\geqslant 1.\]
It is well known that $$\frac{1}{q(n)(q(n)+q(n+1))}<\left|\frac{p(n)}{q(n)}-\alpha\right|<\frac{1}{q(n)q(n+1)}\text { for all }n.$$
Therefore $$\frac{1}{q(n)+q(n+1)}<\alpha(n)<\frac{1}{q(n+1)},\text{ for all }n, $$
and then, for every $n$ we have
\begin{align*}
\frac{\alpha(n+1)}{\alpha(n-1)}&<\frac{q(n-1)+q(n+1)}{q(n+2)}<\frac{2q(n)}{q(n+2)}=\frac{2q(n)}{q(n+1)a(n+2)+q(n)}\\&<\frac{2q(n)}{q(n+1)a(n+2)}<\frac{1}{a(n+1)a(n+2)}.
\end{align*}
Let us assume that
\begin{equation}\label{sum}
\sum_{n=1}^{\infty}\frac{1}{a(n)a(n+1)}<\infty.
\end{equation}
We have then the following approximation of $M_n$ by a row-column product
\begin{equation} M_{n}\sim \left[\begin{array}{c}
1\\
0\end{array}\right]\cdot\left[\begin{array}{cc}
 \frac{1}{a(n+1)}\left(1+x^{q(n)}+x^{2q(n)}+\cdots +x^{(a(n+1)-1)q(n)}\right)& x^{a(n+1)q(n)}\end{array}\right]
\end{equation}
with an error of at most $\frac{2\alpha(n+1)}{\alpha(n-1)}<\frac{2}{a(n+1)a_{n+2}}$. Since by (\ref{sum}) the sums of all errors is finite we conclude:
\begin{proposition}\label{rank1}
If the terms of the continued fraction expansion of an irrational $\alpha$ satisfies (\ref{sum}), then the irrational rotation  $(\mathbb{T},\lambda, R_{\alpha})$ of angle $\alpha$ is isomorphic to the Poisson boundary of the dimension space corresponding  to the sequence of  polynomials $P_{n}$, $n\geq 0$, where
\begin{equation}\label{rot}
P_{n}=\frac{1}{a(n+1)}\left(1+x^{q(n)}+x^{2q(n)}+\cdots +x^{(a(n+1)-1)q(n)}\right).
\end{equation}
\end{proposition}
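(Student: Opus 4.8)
The plan is to deduce Proposition~\ref{rank1} from Theorem~\ref{reteta}. By that theorem the rotation $(\mathbb{T},\lambda,R_\alpha)$ is already isomorphic, as a $\mathbb{Z}$-space, to the Poisson boundary of the dimension space $H$ attached to the sequence $(M_n)_{n\ge0}$ displayed above, so it suffices to identify this Poisson boundary with the Poisson boundary of the scalar dimension space attached to the polynomials $(P_n)_{n\ge0}$ of (\ref{rot}). I would carry this out in two steps: first replace $(M_n)$ by the rank-one column-by-row products to which it is close, and then telescope those products down to the $P_n$.

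For the first step, set $c=(1,0)^{T}$ and, for $n\ge1$, $r_n=\bigl(P_n,\ x^{a(n+1)q(n)}\bigr)$, and put $M_n'=c\,r_n$; also put $M_0'=(P_0,\,0)^{T}=c\,P_0$, where the scalar $P_0$ is viewed as a $1\times1$ matrix. Each $M_n'$ is again column-stochastic at $x=1$ (because $P_n(1)=1$). The elementary computation already done before the statement — together with the continued-fraction recursion $\alpha(n-1)=a(n+1)\alpha(n)+\alpha(n+1)$, valid for $n\ge0$ with the convention $\alpha(-1)=1$ — shows that $M_n-M_n'$ is supported in its first column and has operator norm, with respect to the norms attached to the state $\nu$, equal to $2\alpha(n+1)/\alpha(n-1)<2/(a(n+1)a(n+2))$ for all $n\ge0$. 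Hence, by the hypothesis (\ref{sum}), $\sum_{n\ge0}\|M_n-M_n'\|<\infty$. The key point is now that a perturbation of the connecting maps summable in this norm does not alter the Poisson boundary as a $\mathbb{Z}$-space; this is the stability property of dimension spaces of \cite{GH}. If a self-contained argument is preferred, the telescoping identity
\[ M_{m-1}\cdots M_n-M_{m-1}'\cdots M_n'=\sum_{k=n}^{m-1}M_{m-1}'\cdots M_{k+1}'\,(M_k-M_k')\,M_{k-1}\cdots M_n, \]
together with $\|M_k\|,\|M_k'\|\le1$, gives
\[ \bigl|\,\|M_{m-1}\cdots M_n f\|_{\nu_m}-\|M_{m-1}'\cdots M_n' f\|_{\nu_m}\,\bigr|\le\Bigl(\sum_{k\ge n}\|M_k-M_k'\|\Bigr)\|f\|_{\nu_n} \]
uniformly in $m$; taking $\liminf_m$ and passing to the $\|\cdot\|$-completions yields a $\mathbb{Z}$-equivariant isometric order isomorphism between the Poisson boundary of $H$ and that of the dimension space $H'$ attached to $(M_n')_{n\ge0}$.

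For the second step I would telescope $H'$. Since $r_n\,c=P_n$ for $n\ge1$ (and, with $r_0:=P_0$), every $M_n'$ factors through $A^1$ as $A^{k(n)}\xrightarrow{\,r_n\,}A^1\xrightarrow{\,c\,}A^{k(n+1)}$. Inserting these factorizations turns the defining direct system of $H'$ into the zig-zag
\[ A^1\xrightarrow{\,\cdot P_0\,}A^1\xrightarrow{\,c\,}A^2\xrightarrow{\,r_1\,}A^1\xrightarrow{\,c\,}A^2\xrightarrow{\,r_2\,}\cdots, \]
whose $A^1$-subsystem has connecting maps multiplication by $P_0,P_1,P_2,\dots$. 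A zig-zag and a cofinal subsystem have the same direct limit; here the identification is effected by positive $A$-module maps, so it respects the positive cone and the $\mathbb{Z}$-action, and it is readily checked to be isometric for the $\nu$-norms, hence it descends to a $\mathbb{Z}$-equivariant isometric order isomorphism of Poisson boundaries. Therefore the Poisson boundary of $H'$ is exactly the Poisson boundary of the scalar dimension space of $(P_n)_{n\ge0}$, and combining the two steps proves the proposition.

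The main obstacle I anticipate is the perturbation-stability step: one must check that the summable bound on $\|M_n-M_n'\|$ genuinely controls the $\liminf$-defined norms on the whole direct limit and that the resulting map between completions is an order isomorphism, not merely a bijective linear isometry — this is precisely where \cite{GH} does the real work. By contrast the norm estimates for $M_n-M_n'$ have essentially been carried out in the discussion preceding the statement, and the zig-zag telescoping is a routine manipulation of direct limits.
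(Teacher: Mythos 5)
Your proof is correct and follows essentially the same route as the paper: the paper's own argument consists precisely of the approximation of $M_n$ by the column-row product with error $2\alpha(n+1)/\alpha(n-1)<2/(a(n+1)a(n+2))$, summable by (\ref{sum}), followed by the (implicit) collapse of the rank-one maps to the scalar system $(P_n)$. You have simply made explicit the perturbation-stability and telescoping steps that the paper delegates to \cite{GH} and to the reader.
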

\bigskip

We will show that if the terms of the fraction expansion of $\alpha$ satisfy condition  (\ref{sum}), we have a concrete realization of the irrational rotation by the the cutting and stacking method. For simplicity, we assume that $a(n)\geq 2$, for all $n\geq 1$ (an explicit realization can also be achieved without this additional condition).

It is well known that $q(n+1)=a(n+1)q(n)+q(n-1)$, for $n\geq 2$, where  $q(0)=1$. Let $X=\prod_{n\geq 1}\{1,2,\ldots a(n)\}$, for $n\geq 1$. Denote by $T$ the product type odometer on $(X,\mu)$, where $\mu=\otimes_{n\geq 1}\mu_n$ with $\mu_{n}(i)=\frac{1}{k(n)}$ for $i\in X_n$ and $n\geq 1$.  Consider further, the function $h:X\rightarrow \mathbb{N}^*$ defined by
$$h(i,\ldots )=1,$$ for $i=1,2,\ldots a_1-1$, and $$h(a(1),a(2,\ldots a(n-1),i,\ldots)=q(n-1),$$ for $i=1,2,\ldots, a(n)-1$ and $n\geq 2$. Let us consider $Y=\{(x,n)\in X\times \mathbb{N}, 0\leq n <h(x)\}$ endowed with the measure $\mu$ which is the restriction to $Y$ of the product measure of $\nu$ with the counting measure on $\mathbb{Z}$. Denote by $T$ the usual product odometer corresponding to $X$. Define the transformation $S$ on $Y$ by
\begin{equation*}\label{transf}
S(x,n)=\begin{cases}
(x, n + 1)&\text{ if }n+1 < h(x)\\
(Tx,0) &\text{ if } n+1= h(x)
\end{cases}\end{equation*}
Let $E=\bigsqcup_{n\geq 1}X_{n}$ where $X_{n}=\{1,2,\ldots a(n)\}$. Define $b:E\rightarrow \mathbb{Z}$ by setting $b(k)=(k-1)q(n-1)$ for $k\in X_{n}$.
Let now $\mathcal{R}$ be the tail equivalence on $X$ and let $c:\mathcal{R}\rightarrow{Z}$ be the cocycle defined by  $$c(x,y)=\sum_{i=1}^{\infty} (b(x_{i})-b(y_{i})) \text{ for }(x,y)\in\mathcal{R}.$$
Then, Mackey range of $c$ is isomorphic to the Poisson boundary of the dimension space of $(P_n)$ from Proposition \ref{rank1}.
We have the following characterization of the Mackey range of $c$.
\begin{proposition}\label{r1}
The Mackey range of $c$ is isomorphic as a $\mathbb{Z}$-space to $(Y,\mu, S)$.
\end{proposition}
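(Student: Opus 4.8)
The plan is to make the abstract Mackey range of $c$ concrete as the orbit space of an explicit, manifestly dissipative transformation of $X\times\mathbb{Z}$, and then to identify that orbit space with the tower $(Y,\mu,S)$ using the arithmetic of the convergents $q(n)$. First I would unwind the Mackey range exactly as in Proposition~\ref{mack}: it is the quotient of $\bigl(X\times\mathbb{Z},\,\nu\times(\text{counting measure})\bigr)$ by the relation $\widetilde{\mathcal{R}}$ given by $((x,m),(y,n))\in\widetilde{\mathcal{R}}$ exactly when $(x,y)\in\mathcal{R}$ and $n=m+c(x,y)$, carrying the residual $\mathbb{Z}$-action $\alpha_{k}(x,m)=(x,m+k)$. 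Since the odometer $T$ on $X=\prod_{i\ge 1}\{1,\dots,a(i)\}$ generates $\mathcal{R}$ up to null sets, $\widetilde{\mathcal{R}}$ is the orbit relation of $\widetilde{T}(x,m)=(Tx,\,m-f(x))$, where $f(x):=c(Tx,x)$. Computing $f$ from $b$: if $x$ has carry length $n$ (that is, $x_i=a(i)$ for $i<n$ and $x_n<a(n)$), then reading off the coordinates changed by $T$ and using $b(k)=(k-1)q(j-1)$ on $X_j$ together with the telescoping identity $\sum_{i=1}^{m}(a(i)-1)q(i-1)=q(m)-1-\sum_{j=0}^{m-2}q(j)$ (a consequence of $q(m)=a(m)q(m-1)+q(m-2)$) gives $f(x)=q(n-1)-\sum_{i=1}^{n-1}(a(i)-1)q(i-1)$, with $1\le f(x)\le h(x)=q(n-1)$ throughout. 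The crucial auxiliary step is the behaviour of $c$ along the Kakutani--Rokhlin towers of the odometer: for $x$ in the level-$N$ base $\{x_1=\dots=x_N=1\}$ with $x_{N+1}<a(N+1)$ and $Q_N=\prod_{i\le N}a(i)$, one has $c(T^{Q_N}x,x)=q(N)$, while $j\mapsto c(T^{j}x,x)$ is strictly increasing on $\{0,1,\dots,Q_N\}$ with explicitly computable values; this is the place where the $q$-recursion really enters.

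Because $f>0$, the $\mathbb{Z}$-coordinate strictly decreases along $\widetilde{T}$-orbits, so $\widetilde{T}$ is totally dissipative and $\widetilde{\mathcal{R}}$ is smooth; hence the Mackey range is, as a measured $\mathbb{Z}$-space, the orbit space of $\widetilde{T}$ with its residual action, and it is realised by any measurable transversal. I would then exhibit a fundamental domain for $\widetilde{T}$ whose fibre over a carry-length-$n$ point $x$ consists of exactly $q(n-1)=h(x)$ integers, arranged by means of the tower values of $c$ above so that these fibres tile $\mathbb{Z}$ along each $\widetilde{T}$-orbit; on this fundamental domain the residual action becomes exactly the tower map $S$, the pushforward of $\nu\times(\text{counting measure})$ lies in the measure class of $\mu$, and the resulting bijection onto $Y$ is measurable, measure-class preserving and $\mathbb{Z}$-equivariant. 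Verifying these three properties completes the proof.

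The step I expect to be the real obstacle is arranging the fundamental domain so that the roof is exactly $h$. The obvious choice --- the highest non-negative level of each orbit --- realises the Mackey range as the flow tower over $(X,T)$ with roof $f$; but since $f<h$ precisely on the positive-measure set of points of carry length $\ge 2$, this is not the same tower over the base $X$ as the one with roof $h$. So one must use a less transparent fundamental domain, whose fibre over $x$ has size $h(x)$ but sits as a non-interval inside a longer stretch of $\mathbb{Z}$, chosen so as to interleave with neighbouring fibres and tile along $\widetilde{T}$-orbits --- equivalently, one re-stacks the $f$-tower into the $h$-tower level by level. Either way the rearrangement is powered entirely by $q(n)=a(n)q(n-1)+q(n-2)$ and the values of $c$ on the Rokhlin towers; it is the only non-routine ingredient, the remaining items (unwinding the Mackey range, dissipativity, the digit computation of $f$, and the equivariance and measure checks) being straightforward bookkeeping.
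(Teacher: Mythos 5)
Your first two steps are exactly the paper's route: the paper deduces Proposition~\ref{r1} from the general proposition that follows it, whose proof exhibits $Y=\{(x,j):0\le j<c(Tx,x)\}$ as a fundamental domain for $\widetilde{\mathcal{R}}$ in $X\times\mathbb{Z}$ and checks that the residual $\mathbb{Z}$-action restricts to the tower map $S$. Your computation of $f(x)=c(Tx,x)=q(n-1)-\sum_{i=1}^{n-1}(a(i)-1)q(i-1)$ on points of carry length $n$ is correct, and you are right that this is not the function $h$ displayed in the construction of $(Y,\mu,S)$: already for carry length $2$ one gets $f=1$ while $h=q(1)=a(1)\ge 2$. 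The paper silently identifies the two; your diagnosis of the mismatch is the valuable part of the proposal.

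The gap is precisely in the step you defer: the proposed re-stacking of the $f$-tower into the $h$-tower cannot be carried out. The Mackey range is a fixed measured $\mathbb{Z}$-space (the point realization of $L^{\infty}(X\times\mathbb{Z})^{\widetilde{\mathcal{R}}}$ with the residual action); choosing a different fundamental domain changes the point realization, never the isomorphism class, and every realization is the tower over $(X,\mu,T)$ with roof $f=c(T\cdot,\cdot)$. That tower is not isomorphic, as a nonsingular $\mathbb{Z}$-space, to the tower with roof $h$: both are ergodic and conservative (their first-return maps to the base $X\times\{0\}$ are the odometer $T$) and both preserve the restriction of $\mu\times(\text{counting})$, but $\int f\,d\mu<\infty$ under hypothesis~(\ref{sum}) (which keeps $q(m)/(a(1)\cdots a(m))$ bounded), whereas $\int h\,d\mu=\frac{a(1)-1}{a(1)}+\sum_{n\ge 2}\frac{(a(n)-1)q(n-1)}{a(1)\cdots a(n)}=\infty$, since $q(n-1)\ge a(1)\cdots a(n-1)$ makes every summand at least $\tfrac12$. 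An ergodic conservative transformation carries at most one equivalent $\sigma$-finite invariant measure up to scaling, so the $h$-tower admits no finite invariant measure while the $f$-tower (and the irrational rotation the Mackey range is supposed to be) does. Hence no interleaved transversal, however clever, produces the $h$-tower, and the arithmetic of $q(n)=a(n)q(n-1)+q(n-2)$ cannot rescue the step. The correct conclusion of your argument --- and of the paper's --- is Proposition~\ref{r1} with $h$ replaced by $c(Tx,x)$; as stated, with the displayed $h$, the proposition needs this correction, and your own digit computation is the proof that the two roof functions differ.
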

\noindent This is a consequence of the following more general result:
\begin{proposition}
Let $T$ be a nonsingular automorphism of $(X, \mu)$ and denote by $\mathcal{R}$ the equivalence relation generated by $T$, that is $\mathcal{R} = \{(x, T ^{n}x), x \in X\}$. Let $c:\mathcal{R}\rightarrow\mathbb{Z}$ be  a cocycle which satisfies $c(Tx, x)>0$ for every $x\in X$. Let $h: X\rightarrow \mathbb{Z}$ be a function defined by $h(x)=c(Tx,x)$ and let $Y=\{(x,n)\in X\times \mathbb{Z}\text{ with } 0\leq n< h(x)\}$ endowed with the measure $\nu$, the restriction to $Y$ of the product measure $\mu\times m$ on $X\times\mathbb{Z}$ where $m$ is the counting measure on $\mathbb{Z}$. Denote by $S$ the transformation on $Y$ defined by
$$
S(x,n)=\begin{cases}
(x, n + 1)&\text{ if }n+1 < h(x)\\
(Tx,0) &\text{ if } n+1= h(x).
\end{cases}
$$
Then the Mackey range of $c$ is isomorphic as a $\mathbb{Z}$-space to $(Y,\nu, S)$.
\end{proposition}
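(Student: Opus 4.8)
The plan is to reduce everything to the concrete model of the Mackey range that is used in (the proof of) Proposition~\ref{mack}. Form the skew-product relation $\widetilde{\mathcal{R}}$ on $(X\times\mathbb{Z},\mu\times m)$, where $m$ is the counting measure, so that $((x,r),(y,s))\in\widetilde{\mathcal{R}}$ iff $(x,y)\in\mathcal{R}$ and $s=r+c(x,y)$; it is generated by $\widetilde{T}(x,r)=(Tx,r+c(x,Tx))=(Tx,r-h(x))$, using the cocycle identity $c(x,Tx)=-c(Tx,x)=-h(x)$. By definition the Mackey range of $c$ is the quotient $(X\times\mathbb{Z})/\widetilde{\mathcal{R}}$ equipped with the $\mathbb{Z}$-action induced by the relation-preserving translation $\sigma_n(x,r)=(x,r+n)$ (this is the point model of the action $\alpha_n f(x,r)=f(x,r-n)$ of Proposition~\ref{mack}). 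The goal is to identify this quotient $\mathbb{Z}$-space with $(Y,\nu,S)$.

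The combinatorial core is that $Y$ is a fundamental domain for $\widetilde{\mathcal{R}}$. Iterating, $\widetilde{T}^{k}(x,r)=(T^{k}x,\,r-c(T^{k}x,x))$ for every $k\in\mathbb{Z}$, and the cocycle relation gives $c(T^{k+1}x,x)-c(T^{k}x,x)=c(T^{k+1}x,T^{k}x)=h(T^{k}x)\geq 1$. Hence $k\mapsto c(T^{k}x,x)$ is a strictly increasing integer sequence with value $0$ at $k=0$; it therefore tends to $\pm\infty$, and the half-open intervals $[\,c(T^{k}x,x),\,c(T^{k+1}x,x)\,)$, $k\in\mathbb{Z}$, partition $\mathbb{Z}$. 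Consequently, for each $(x,r)$ there is a \emph{unique} $k=k(x,r)\in\mathbb{Z}$ with $0\leq r-c(T^{k}x,x)<h(T^{k}x)$, i.e.\ with $\widetilde{T}^{k}(x,r)\in Y$. Define $\pi\colon X\times\mathbb{Z}\to Y$ by $\pi(x,r)=\widetilde{T}^{k(x,r)}(x,r)$; then $\pi$ is Borel, constant on $\widetilde{\mathcal{R}}$-classes (by uniqueness of $k$), and $\pi|_{Y}=\mathrm{id}_{Y}$, so $\pi$ descends to a Borel isomorphism $(X\times\mathbb{Z})/\widetilde{\mathcal{R}}\xrightarrow{\ \sim\ }Y$.

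For the measure: a set $A\subseteq Y$ is null for the Mackey range iff $\pi^{-1}(A)$ is $(\mu\times m)$-null; since $A\subseteq\pi^{-1}(A)\subseteq\bigcup_{k\in\mathbb{Z}}\widetilde{T}^{-k}(A)$ and $\widetilde{T}$ is nonsingular, this happens iff $\nu(A)=0$, so the Mackey-range measure class on $Y$ is exactly that of $\nu$. Finally one transports the action: for $(y,n)\in Y$ (so $0\leq n<h(y)$) one has $\pi(\sigma_{1}(y,n))=\pi(y,n+1)$, which equals $(y,n+1)$ when $n+1<h(y)$ (take $k=0$) and equals $\widetilde{T}(y,h(y))=(Ty,0)$ when $n+1=h(y)$ (take $k=1$, using $c(Ty,y)=h(y)$); in both cases this is $S(y,n)$. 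Thus $\pi$ is a $\mathbb{Z}$-equivariant, measure-class-preserving Borel isomorphism carrying the generator $\sigma_1$ of the Mackey-range action to $S$, which proves the proposition.

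The only genuinely load-bearing step is the tiling claim in the second paragraph — that $Y$ is precisely a fundamental domain for $\widetilde{\mathcal{R}}$, which is what makes $\pi$ well defined and the identification an isomorphism rather than merely a factor map. Everything else is bookkeeping, the main points of care being to keep all maps and identifications modulo $(\mu\times m)$- and $\nu$-null sets, and to match the sign conventions of Proposition~\ref{mack} so that the induced action is generated by $S$ and not $S^{-1}$.
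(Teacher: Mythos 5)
Your proof is correct and follows essentially the same route as the paper's: both hinge on the observation that the intervals $[\,c(T^{k}x,x),\,c(T^{k+1}x,x)\,)$ partition $\mathbb{Z}$, so that $Y$ is a fundamental domain for the skew-product relation $\widetilde{\mathcal{R}}$, and then identify the Mackey range with $Y$ carrying the induced action $S$. The only (immaterial) difference is that you phrase the identification at the level of points and quotient spaces, whereas the paper phrases it dually as the isomorphism $f\mapsto f|_{Y}$ from $L^{\infty}(X\times\mathbb{Z})^{\widetilde{\mathcal{R}}}$ onto $L^{\infty}(Y,\nu)$; you are in fact somewhat more explicit than the paper about the uniqueness of the return index $k(x,r)$ and the matching of measure classes.
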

\begin{proof}
Consider $\widetilde{\mathcal{R}}$, the equivalence relation on $(X \times\mathbb{Z},\mu\times m)$ defined by $((x, m),(y, n)) \in \widetilde{\mathcal{R}}$ if $(x, y)\in \mathcal{R}$ and $n = m + c(x, y)$. In fact $\widetilde{\mathcal{R}}$ is generated by the automorphism $\widetilde{T}$ of $(X\times\mathbb{Z},\mu\times m)$ defined by \[\widetilde{T}(x, m) = (T x, m + c(x, T x)) = (T x, m-1)\text{ for }(x, m)\in X\times\mathbb{Z}.\]
Let $\alpha$ be the action of $\mathbb{Z}$ on $L^{\infty}(X\times\mathbb{Z}, \mu\times p)$ given by $\alpha_{n}(f)(x, m) = f(x, m-n)$. Denote by $L^{\infty}(X\times\mathbb{Z}, \mu\times p)^{\widetilde{\mathcal{R}}}$ the space of all essentially $\widetilde{\mathcal{R}}$-invariant functions of  $L^{\infty}(X\times\mathbb{Z}, \mu\times m)$.
Then, the Mackey range of $c$ is a point realization
of the restriction of the action $\alpha$ of $\mathbb{Z}$ to $L^{\infty}(X\times\mathbb{Z}, \mu\times m)^{\widetilde{\mathcal{R}}}$. Note that if $(x,m)\in X\times\mathbb{Z}$ and $c(T^{k}x,x)\leq m<c(T^{k+1}x,x)$ then $(T^{k}x, m+c(x,T^{k}x))$ is an element of $Y$ which is $\widetilde{\mathcal{R}}$ equivalent to $(x,m)$. Consequently any $\widetilde{\mathcal{R}}$ invariant function $f$ it is completely determined by the values it takes on $Y$. Then
\[\pi :L^{\infty}(X\times\mathbb{Z}, \mu\times m)^{\widetilde{\mathcal{R}}}\rightarrow L^{\infty}(Y,\nu), \text{  \ \ \ }\pi(f) = f|_{Y} \]is an isomorphism.
Let $\beta$ be the action of $\mathbb{Z}$ on $L^{\infty}(X,\mu)$
corresponding to $\alpha$ by the automorphism $\pi$. Hence for $n\geq 1$, we have $\beta_{n} =\pi\circ\alpha_{n}\circ\pi^{-1}$ and it is easy to check that $\beta_{n}(g)=g\circ S^{-n}$ for all $g\in L^{\infty}(Y,\nu)$,  $n\in\mathbb{Z}$. The proposition is proved.
\end{proof}
Aa an immediate consequence of Proposition \ref{r1} we have the following concrete realization of the irrational rotation by the cutting and stacking method.
\begin{remark}

 With the above notation, the irrational rotation of angle $\alpha$ can be obtain by the cutting and stacking method as follows:  we take the unit interval, divide it in $q(1)=a(1)$ equal intervals $I_{1,0}, I_{1,1}, \ldots I_{1, a(1)-1}$ and  stack them vertically in this order with $I_{1,0}$ at the bottom. Define a map $T_1$ on all these intervals except the last one, which maps linearly $I_{1,i}$ onto $I_{1,i+1}$. Having constructed a stack
$I_{n,0}, I_{n,1}, \ldots I_{n, a(n)q(n-1)-1}$ (which we view as a vertical column with $I_{n,0}$ at bottom) and a map $T_n$  defined on all these intervals except for the last one and sending linearly $I_{n,i}$ onto $I_{n,i+1}$, we divide this column into $a_{n+1}$ columns of the same width and place on the top of each of them $q(n-2)$ intervals of the same length, called spacers. More precisely, we divide each of $I_{n,i}$ in $a(n+1)$ intervals $I_{n,i}(k)$ $k=0,1,\ldots a(n+1)-1$ of equal length and for every $k=0,1,\ldots a(n+1)-1$ we place the spacers $I_{n,a(n)q(n-1)}(k), I_{n,a(n)q(n-1)+1}(k),\ldots, I_{n,a(n)q(n-1)+q(n-2)-2}(k), I_{n,q(n)-1}(k)$, of the same length as the intervals  $I_{n,i}(k)$, above the column $I_{n,0}(k), I_{n,1}(k),\ldots, I_{n,q(n)-1}(k)$. We stack all these intervals in a single column by putting $I_{n,0}(k+1)$ over $I_{n,q(n)-1}(k)$, for $k=0,1,\ldots a(n+1)-2$. We obtain a stack of $a(n+1)q(n)$ intervals which starting from the bottom will be denoted by $I_{n+1,0}, I_{n+1,1},\ldots, I_{n+1, a(n+1)q(n)-1}$. We have a map $T_{n+1}$, extending $T_n$ from the previous stage which  is defined everywhere except for $I_{n+1, a{n+1}q(n)-1}$ by mapping $I_{n+1,i}$ onto $I_{n+1,i}$. The space $X$ is the union of all intervals and $T(x)=\lim_{n\rightarrow\infty}T_n(x)$, for $x\in X$. Then $T$ is conjugate with $R_{\alpha}$, the rotation of angle $\alpha$.
\end{remark}

\section{A family dimension space which is AT}
In this section we construct a family $\{T_k, k=2,3,\ldots\}$ of (measure preserving) adic transformations which are approximately transitive (shortly AT) and mutually non isomorphic. We recall that AT property was introduced by Connes and Woods \cite{CW1} in the theory of von Neumann algebras, in order to characterize the Araki Woods factors among all AFD factors. The AT property was reformulated In terms of dimension spaces, by Giordano and Handelman \cite{GH}.

In order to define $T_k$ we construct a Bratteli diagram $B=(V,E)$ where the set of vertices
$V=\cup_{n\geq 0}V_n$, and the set of edges $E=\cup_{n\geq 0}E_n$ are  defined as follows:  $V_0=\{v^n_1\}$, $V_n=\{v^n_1, v^n_2,\ldots, v^n_k\}$  for $n\geq 1$, $E_0=\{e^0_1, e^0_2,\ldots ,e^0_k\}$ and $E_n=\{e^n_{i,i}, 1\leq i\leq k; \  e^n_{i,i+1}, 1\leq i\leq k-1; \ e^n_{k,1} \}$, for $n\geq 1$. An edge $e^n_{i,j}$ will connect the vertex $v^{n}_i$ and the vertex $v^{n+1}_j$.

For any $n\geq 2$, we order the set of edges reaching the vertex  $v^{n+1}_i\in V_n$ by $e^n_{i,i}<e^{n}_{i-1,i}$ if $2\leq i\leq k$ and $e^n_{1,1}<e^{n}_{k,1}$. We denote by $T_k$ the corresponding adic transformation.  We consider the measure $\mu$ determined by the system of transition probabilities $p:E\rightarrow[0,1]$, defined by $p(e)=\frac{1}{k}$ for $e\in E_0$ and $p(e)=\frac{1}{2}$ for $e\in E\setminus E_0$. Note that with respect to this measure $T_k$ is a nonsingular measure preserving transformation.
In Figure 3 and Figure 4 we have the ordered Bratteli diagrams corresponding to $n=3$ and $n=4$, where the order is black $<$ red.
The dimension space corresponding to $T_k$ is given by a sequence of circulant matrices $M_n$ of dimension $k\geq 2$ with $1$ on diagonal, $x^{2^{n}}$ bellow the diagonal $x^{2^{n}}$ in the upper right corner and $0$ otherwise, that is
  $M_n=\frac{1}{2}\left(I+x^{2^n}P\right)$, where $P$ is the standard cyclic permutation matrix.
  \begin{figure}
  \includegraphics[height=3in, width=6in]{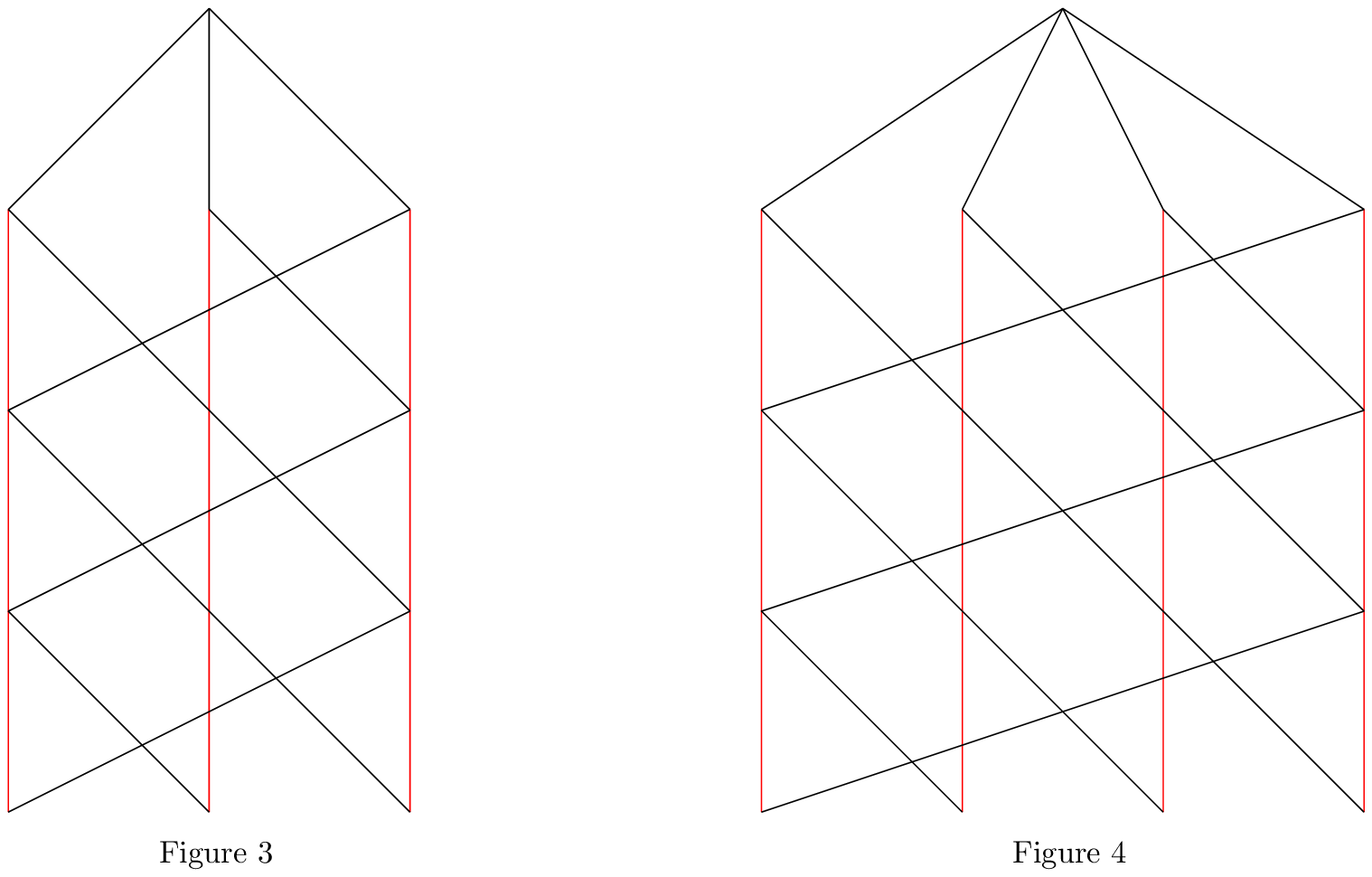}
  \end{figure}
  We will show that every $T_k$ is AT and that these transformation are pairwise nonisomorphic. The proof that the corresponding dimension space is AT generalizes the proof from \cite{DQ} of the fact that the Morse automorphism is AT, which occurs for $k=2$ and it follows the same idea for all $k$. For simplicity we will write the proof for $n=4$.
  \begin{proposition}
  The dimension space  $H$corresponding to $M_{n}=\frac12\left[
                                                   \begin{array}{cccc}
                                                     1 & 0 & 0 & x^{2^{n}} \\
                                                     x^{2^{n}} & 1 &0 & 0\\
                                                     0 & x^{2^{n}} & 1 & 0 \\
                                                     0 & 0 & x^{2^{n}} & 1 \\
                                                   \end{array}
                                                 \right]$, $n\geq 0$
  is AT.
  \end{proposition}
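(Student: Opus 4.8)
The plan is to verify the AT criterion from \cite{GH}: a dimension space is AT if and only if every element of $H^+$ can be approximated in the relevant $\|\cdot\|$-norm (the one attached to the state $\nu=(1,1,1,1)$) by elements of the form $M_{m-1}\cdots M_n(f)$ where $f\in A^{k(n)}$ is a single "rank-one" column, i.e. a column all of whose entries are scalar multiples of one fixed positive Laurent polynomial. Concretely, it suffices to show: given $n$ and a standard basis vector $\epsilon_i\in A^{k(n)}$ (these generate $H^+$ as a positive cone up to the $\mathbb Z$-action and scaling), and given $\varepsilon>0$, there exist $m>n$ and a positive Laurent polynomial $g$ and scalars $\lambda_1,\dots,\lambda_k\ge 0$ with $\sum_j\lambda_j=1$ such that $\big\|\,M_{m-1}\cdots M_n\,\epsilon_i - (\lambda_1 g,\dots,\lambda_k g)^T\,\big\|_{\nu_m}<\varepsilon$. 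Following \cite{DQ}, the key is to exploit the combinatorial structure of the circulant matrices $M_\ell=\tfrac12(I+x^{2^\ell}P)$.

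The first step is to compute the product $W_{n,m}:=M_{m-1}M_{m-2}\cdots M_n$ explicitly. Since all the $M_\ell$ are polynomials in the single cyclic permutation matrix $P$ (with $P^k=I$), they commute, and $W_{n,m}=\prod_{\ell=n}^{m-1}\tfrac12(I+x^{2^\ell}P)=2^{-(m-n)}\prod_{\ell=n}^{m-1}(I+x^{2^\ell}P)$. Expanding the product, the coefficient of $P^r$ (for $0\le r\le k-1$) is $2^{-(m-n)}\sum_{S} x^{\sum_{\ell\in S}2^\ell}$, where the sum runs over subsets $S\subseteq\{n,\dots,m-1\}$ with $|S|\equiv r\pmod k$. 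In particular, when $m-n$ is large the exponents $\sum_{\ell\in S}2^\ell$ range over a large initial segment of integers (all integers in $[0,2^m-2^n)$ whose base-$2$ digits lie in positions $n,\dots,m-1$), and each column of $W_{n,m}$ is, up to the $P^r$-cyclic shift of its support among the $k$ coordinates, a uniform average of $x$-powers. The norm computation $\|\cdot\|_{\nu_m}$ just sums absolute values of all coefficients against the weight $1$, so $\|W_{n,m}\epsilon_i\|_{\nu_m}=1$ for every $i$, which is consistent.

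The heart of the argument, and the main obstacle, is the approximation step: one must show the column $W_{n,m}\epsilon_i$ is close in $\|\cdot\|_{\nu_m}$ to a genuine rank-one column $(\lambda_1 g,\dots,\lambda_k g)^T$. The idea from \cite{DQ} is that for $m$ much larger than $n$, the "new" high-order binary digits $n,\dots,m-1$ dominate: splitting a subset $S\subseteq\{n,\dots,m-1\}$ according to its size modulo $k$ produces $k$ pieces whose $x$-power supports become asymptotically equidistributed and mutually "aligned" after dividing by the appropriate shift, because choosing which residue class $|S|$ falls into costs only a bounded combinatorial factor that washes out as $m-n\to\infty$. Made precise: one takes $g$ to be (a normalization of) the full sum $2^{-(m-n)}\sum_{S\subseteq\{n,\dots,m-1\}}x^{\sum_{\ell\in S}2^\ell}=2^{-(m-n)}\prod_{\ell=n}^{m-1}(1+x^{2^\ell})$, sets $\lambda_j=\binom{?}{?}$-type binomial mass of the residue classes, and estimates $\|W_{n,m}\epsilon_i-(\lambda_1 g,\dots,\lambda_k g)^T\|_{\nu_m}$ by the total variation between the "restricted to residue class $r$" measure and $\lambda_r\times$(uniform). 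This total variation tends to $0$ as $m-n\to\infty$ by a local central limit / concentration estimate for the binomial distribution of $|S|$ modulo $k$ (the counts $\#\{S:|S|\equiv r\}$ are all $\sim 2^{m-n}/k$, and more importantly the conditional distributions of $\sum_{\ell\in S}2^\ell$ given $|S|\equiv r$ converge to each other). I expect this concentration/equidistribution estimate — carried out carefully enough to control the $\ell^1$ norm of coefficient vectors, not merely a weak limit — to be the technical crux; everything before and after it is bookkeeping with circulant matrices and telescoping the errors over $m$.

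Finally, I would assemble these: given $\epsilon_i$ and $\varepsilon$, choose $m$ with the total-variation error below $\varepsilon$, producing the required rank-one approximant; since the $\mathbb Z$-action and positive scalings of such $\epsilon_i$ generate a dense subset of $H^+$, and the rank-one approximation property is preserved under these operations and under the direct-limit maps, the criterion of \cite{GH} applies and $H$ is AT. The case of general $k$ is identical with $P$ the $k\times k$ cyclic permutation; writing it for $k=4$ only fixes notation.
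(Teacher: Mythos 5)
Your setup is sound: the reduction to approximating the composite maps by column--row products, the observation that all the $M_\ell$ commute because they are polynomials in the cyclic permutation $P$, and the explicit formula identifying the coefficient of $P^r$ in $W_{n,m}=2^{-(m-n)}\prod_{\ell=n}^{m-1}(I+x^{2^\ell}P)$ with the sum of $x^{\sum_{\ell\in S}2^\ell}$ over subsets $S\subseteq\{n,\dots,m-1\}$ with $|S|\equiv r \pmod k$ are all correct and consistent with the paper. The gap is in the step you yourself flag as the crux, and it is not merely a missing estimate: the claim that ``the conditional distributions of $\sum_{\ell\in S}2^\ell$ given $|S|\equiv r$ converge to each other'' is false. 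The map $S\mapsto\sum_{\ell\in S}2^\ell$ is a bijection (uniqueness of binary expansion), so the exponent determines $S$, hence $|S|$, hence $|S|\bmod k$. The $k$ conditional distributions therefore have pairwise \emph{disjoint} supports and sit at total variation distance $1$ from one another for every $m$. Concretely, with $g=2^{-(m-n)}\prod_{\ell}(1+x^{2^\ell})$ and $\lambda_j\approx 1/k$, the $j$-th entry of $W_{n,m}\epsilon_i$ has all its mass ($\approx 1/k$) on exponents of digit sum $\equiv j-i$, while $\lambda_j g$ puts only $\approx 1/k^2$ of its mass there; each entry of the difference has $\ell^1$-norm $\approx 2(1-1/k)/k$, so the total error is $\approx 2(1-1/k)$, bounded below by a constant independent of $m-n$. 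No choice of scalars $\lambda_j$ and no single polynomial $g$ can fix this, because as long as monomials multiply without binary carries, the digit-sum-mod-$k$ is additive and the $k$ residue classes can never mix.

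This is precisely why the paper's proof (following Dooley--Quas) is organized the way it is. It splits the digit positions into blocks, puts one designated position per block into the column vector $\varphi=(\varphi_0,\dots,\varphi_3)^T$ and the remaining positions into the row $(g_0,g_3,g_2,g_1)$, and --- crucially --- enlarges the support of the $g_p$ to include the ``full block'' monomials $X^{2^{8Mj}+2^{8Mj+1}+\cdots+2^{8Mj+4M-1}}$ and the ``carry target'' monomials $X^{2^{8Mj+4M}}$. The identity $2^{8Mj}+(2^{8Mj}+\cdots+2^{8Mj+4M-1})=2^{8Mj+4M}$ means a single monomial of the product can be factored in several ways whose digit-sum residues differ; this carrying is the only mechanism that lets mass flow between residue classes, and the weights $b_{c_0\cdots}$ are tuned (via the binomial/Chebyshev concentration of the parameter $K$ around $2^{4M}$) so that the total contribution to each coefficient is uniformly close to $1$. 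Your proposal contains none of this carry mechanism, so the approximation it aims for cannot be achieved; the argument would need to be rebuilt along the paper's lines rather than patched.
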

  \begin{proof}
  It will be enough to show for $M$ large enough, the matrix
  \[A=\prod_{j=0}^{N-1}\prod_{i=8Mj}^{8Mj+4M}\frac{1}{2}\left[\begin{array}{cccc}
                                                   1 & 0 & 0 &  x^{2^{i}}\\                                                                                                        x^{2^{i}} & 1 &0 & 0\\
                                                                                                        0 & x^{2^{i}} & 1 & 0 \\
                                                                                                        0 & 0 & x^{2^{i}} & 1 \\
                                                   \end{array}
                                                 \right]\]can be approximated arbitrarily well by a column row product.

  We write $p\sim q$ for $p=q \ (\text{mod }4)$. A monomial
  $X^{c_{8Mj}2^{8Mj}+c_{8Mj+1}2^{8Mj+1}\cdots+c_{8Mj+4M}2^{8Mj+4M}}$
  will be called basic monomial. For $p\in\{0,1,2,3\}$, let
  \[\varphi_{p}=\frac{1}{2^{N}}\sum_{\sum_{j=0}^{N-1}a_{j}\sim p
  }X^{\sum_{j=0}^{N-1}a_{j}2^{8Mj}}\] and
  \[f_{p}=\sum_{\sum_{j=0}^{N-1}\sum_{i=0}^{4M}c_{8Mj+i}\sim p}b_{c_{0}c_{9M}\ldots c_{8M(N-1)}}\prod_{j=0}^{N-1}X^{c_{8Mj}2^{8Mj}+c_{8Mj+1}2^{8Mj+1}\cdots+c_{8Mj+4M}2^{8Mj+4M}},\]where the basic monomials $X^{c_{8Mj}2^{8Mj}+c_{8Mj+1}2^{8Mj+1}\cdots+c_{8Mj+4M}2^{8Mj+4M}}$ appearing in $f_{p}$ are either of the form
  \begin{eqnarray}
  && X^{2^{8Mj}+2^{8Mj+1}\cdots+2^{8Mj+4M-1}} \text{ \ \ \ \ \  \ \ \ \  \ \ \ \ \ \ \  \ \ \ \ \  \ or }\label{a}\\
  && X^{c_{8Mj+1}2^{8Mj+1}+\cdots+c_{8Mj+4M}2^{8Mj+4M}}, \text { that is }c_{8Mj}=0 \label{b}
  \end{eqnarray}
  and
  \[b_{c_{0}c_{9M}\ldots c_{8M(N-1)}}=2^{N+2}2^{-3M\cdot\sum_{j=0}^{N-1}c_{8Mj}}(1-2^{-7M})^{N-\sum_{j=0}^{N-1}c_{8Mj}}.\]Notice that $\sum_{j=0}^{N-1}c_{8Mj}$ is the number of basic monomials of the form (\ref{a}) which appear in the product $\prod_{j=0}^{N-1}X^{c_{8Mj}2^{8Mj}+\cdots+c_{8Mj+4M}2^{8Mj+4M}}$.
  For $p\in\{0,1,2,3\}$ let \[g_{p}=\frac{f_{p}}{2^{(4M+1)N}}.\] For a
  monomial $P=X^{n}$, let us denote by $\#P$ the number of non-zero
  coefficients of the binary expansion of $n$. Notice that
  \[A=\left[
                    \begin{array}{cccc}
                      a_{0} & a_{3} & a_{2} & a_{1} \\
                      a_{1} & a_{0} & a_{3} & a_{2} \\
                      a_{2} & a_{1} & a_{0} & a_{3} \\
                      a_{3} & a_{2} & a_{1} & a_{0} \\
                    \end{array}
                  \right],
  \]
  where $a_{i}$ is a linear combination of monomials $P$ with $\#
  P\sim i$. We will show that for any $\varepsilon>0$, the product

  \[\left[
      \begin{array}{c}
        \varphi_{0} \\
        \varphi_{1} \\
        \varphi_{2} \\
        \varphi_{3} \\
      \end{array}
    \right]\cdot \left[
                   \begin{array}{cccc}
                     g_{0} & g_{3} & g_{2} & g_{1} \\
                   \end{array}
                 \right]
  \]
  approximates in norm $A$, up to $\varepsilon$, if $M$ is chosen large enough.

  It easily can be observed that if $P$ is a monomial of a
  $\varphi_{i}$ and $Q$ is a monomial of a $g_{j}$, then $PQ$ is a
  monomial of $a_{k}$ and $k\sim i+j$. Hence, it will be
  enough to show that
  \[\|\varphi_{i}(g_{1}+g_{2}+g_{3}+g_{4})-a_{1}-a_{2}-a_{3}-a_{4}\|\]
  can be made arbitrarily small for sufficiently large $M$. The numbers of terms appearing in
  $g_{1}+g_{2}+g_{3}+g_{4}$ which contain $S$ basic monomials of the
  form (\ref{a}) is
  \[\left(
                                                                                          \begin{array}{c}
                                                                                            N \\
                                                                                            S
                                                                                          \end{array}
                                                                                        \right)2^{4M(N-S)}
  .\]
  Then,
  \[\|g_{0}+g_{1}+g_{2}+g_{3}\|=\frac{1}{2^{N(4M+1)}}\sum \left(
                                                                                          \begin{array}{c}
                                                                                            N \\
                                                                                            S
                                                                                          \end{array}
                                                                                        \right)2^{4M(N-S)}2^{N+2}2^{-3MS}(1-2^{-7M})^{N-S}=2^{N+1}2^{4MN}=4.\]
  Notice that  for $i\in\{0,1,2,3\}$,  $$4\|\varphi_{i}\|=\|\varphi_{i}(g_{0}+g_{1}+g_{2}+g_{3})\|$$  and $\|\varphi_{i}\|$ are exponentially close to $\frac{1}{4}$  since
  \[\|\varphi_{i}- \frac{1}{4}\|\leq \frac{1}{2\cdot\sqrt{2}^{N(4M+1)}}.\]
We will compute the coefficients of the monomials of the matrix  \[\left[
      \begin{array}{c}
        \varphi_{0} \\
        \varphi_{1} \\
        \varphi_{2} \\
        \varphi_{3} \\
      \end{array}
    \right]\cdot \left[
                   \begin{array}{cccc}
                     f_{0} & f_{3} & f_{2} & f_{1} \\
                   \end{array}
                 \right]
  \]and we will show that most of them are uniformly close to $1$. Let $$P=\prod_{j=0}^{N-1}X^{c_{8Mj}2^{8Mj}+\cdots+c_{8Mj+4M}2^{8Mj+4M}}$$ and $p=\sum_{j=0}^{N-1}c_{8Mj}$. Let us assume that $P$ has $
  k_{0}$ basic monomials of the form (\ref{a}) and $k_{1}$ basic
  monomials of the form
  \begin{equation}\label{c}
  X^{2^{8Mj+4M}}.
  \end{equation}
  We want to see which are the monomials $A$ of one of the $\varphi_{p}$'s and which are the monomials
  $Q$ of one of the $f_{p}$'s such that $P=AQ$. Since $A$ is a monomials of a certain $\varphi_{p}$, $A=X^{\sum_{j=0}^{N-1}a_{j}2^{8Mj}}$.
  There is a unique choice for the basic monomials of $Q$ and for the $a_{j}$'s
  which correspond to the basic monomials of $P$ not of the form (\ref{a}) or (\ref{c}).
  Assume that $l_{0}$ of the $a_{j}$'s corresponding to basic monomials of the
  form (\ref{a}) are $0$ and $l_{1}$ of the $a_{j}$'s corresponding to
  basic monomials of the form (\ref{c}) are $1$. Then, $\# A=p-l_{0}+l_{1} \sim \#
  P-\# Q$. If $\# A \sim q$, then $p-l_{0}+l_{1}\sim q$. We compute
  the contribution to the coefficient of $P$ coming from $A$ with
  $l_{0}$ of the $a_{j}$'s corresponding to basic monomials of the form (\ref{a})
  equal to $0$ and $l_{1}$ of the $a_{j}$'s corresponding to basic monomials of
  the form (\ref{c}) equal to $1$. Then $Q$ must have exactly $l_{0}+l_{1}$
  basic monomials of the form (\ref{a}) and the contribution to the
  coefficient of $P$ is
  \[4\left(\begin{array}{c}
                                                 k_{0} \\
                                                 l_{0}
                                               \end{array}\right)\left(
                                                                   \begin{array}{c}
                                                                     k_{1} \\
                                                                     l_{1}\\
                                                                   \end{array}
                                                                 \right)2^{-3M}(l_{0}+l_{1})(1-2^{-7M})^{N-(l_{0}+l_{1})}
  \]
  Let $K=k_{0}+k_{1}$, summing over $l_{0}+l_{1}=L$ we  obtain
  \[4\left(
       \begin{array}{c}
         K \\
         L \\
       \end{array}
     \right)2^{-3ML}(1-2^{-7M})^{N-L}
  \]
  and summing over all $L=q$ (mod $4$), we get
  \[4\sum_{0\leq L\leq K; L\sim p }\left(
       \begin{array}{c}
         K \\
         L \\
       \end{array}
     \right)2^{-3ML}(1-2^{-7M})^{N-L}.
  \]
  For a monomial $P$, the quantity $K$ is the number of basic monomials of the form (\ref{a}) plus the number of basic monomials of form (\ref{c}) which  appear in $P$. Then by Cebyshev inequality it results that the number of monomials with $K$ between
  $2^{4M}-2^{8M/3}$ and $2^{4M}+2^{8M/3}$ is larger than $(1-2^{-4M/3})\cdot2^{(4M+1)N}$. For $K$ in this interval, we notice that for any $q=0,1,2,3$ the sums above are exponentially close to
  \[S(K)=\sum_{0\leq L\leq K}\left(
       \begin{array}{c}
         K \\
         L \\
       \end{array}
     \right)2^{-3ML}(1-2^{-7M})^{N-L}=(1-2^{-7M})^{N-K}(2^{-3M}+1-2^{-7M})^{K}.
  \]
  Notice that
  \[\frac{S(2^{4M}+z)}{S(2^{4M})}=\left(\frac{2^{-3M}+1-2^{-7M}}{1-2^{-7M}}\right)^{z},\]where $2^{-8M/3}<z<2^{8M/3}$. It follows that
  \[e^{-2^{-M/3}}<\frac{S(2^{4M}+z)}{S(2^{4M})}<e^{2^{-M/3}}.\]
  Moreover since $$S(2^{4M})\rightarrow 1,\text { as }M\rightarrow\infty$$ it follows that for $K$ in the above range, $S(K)$ differs from $S(2^{4M})$ by a constant which is exponentially close to $1$.

  We have
  \[a_{0}+a_{1}+a_{2}+a_{3}=\sum_{P}\frac{1}{2^{(4M+1)N}}P\text{ and }\varphi_{i}(g_{0}+g_{1}+g_{2}+g_{3})=\sum_{P}\frac{\alpha_{P}}{2^{(4M+1)N}}P.\]Let $\mathcal{C}$ be the set of all $P$ with the corresponding $K$ between $2^{4M}-2^{8M/3}$ and $2^{4M}+2^{8M/3}$. We have seen that the number of elements of $\mathcal{C}$ is at least $(1-2^{-4M/3})\cdot2^{(4M+1)N}$ and then  \[\sum_{P\notin\mathcal{C}}\frac{1}{2^{(4M+1)N}}<2^{-4M/3}.\]
  If $M$ is large enough it follows from the above discussion $|\alpha_{P}-1|<\varepsilon$, for all $P\in\mathcal{C}$. We have,
  \[\|\varphi_{i}(g_{0}+g_{1}+g_{2}+g_{3})-a_{0}+a_{1}+a_{2}+a_{3}\|\leq\sum_{P\in\mathcal{C}}\frac{1}{2^{(4M+1)N}}|\alpha_{P}-1|+
  \sum_{P\notin\mathcal{C}}\frac{1}{2^{(4M+1)N}}+\sum_{P\notin\mathcal{C}}\frac{\alpha_{P}}{2^{(4M+1)N}}.\]Then for $M$ large enough,  \[1-2\varepsilon<(1-\varepsilon)(1-2^{-4M/3})<\sum_{P\in\mathcal{C}}\frac{\alpha_{P}}{2^{(4M+1)N}}=\|\varphi_{i}(g_{0}+g_{1}+g_{2}+g_{3})\|-\sum_{P\notin\mathcal{C}}\frac{\alpha_{P}}{2^{(4M+1)N}}\]
  \[=\|\varphi_{i}\|-\sum_{P\notin\mathcal{C}}\frac{\alpha_{P}}{2^{(4M+1)N}}<1+\varepsilon-\sum_{P\notin\mathcal{C}}\frac{\alpha_{P}}{2^{(4M+1)N}},\]
  and therefore
  $$\sum_{P\notin\mathcal{C}}\frac{\alpha_{P}}{2^{(4M+1)N}}.$$
  Hence, for $M$ large enough,
  \[\|\varphi_{i}g_{0}-a_{0}\|+\|\varphi_{i}g_{1}-a_{1}\|+\|\varphi_{i}g_{2}-a_{2}\|+\|\varphi_{i}g_{3}-a_{3}\|=\|\varphi_{i}(g_{0}+g_{1}+g_{2}+g_{3})-a_{0}+a_{1}+a_{2}+a_{3}\|\]
  can be made arbitrarily small for all $i$. This is enough to conclude that the dimension space $H$ is AT.
  \end{proof}

\begin{theorem} With the above notation we have
\begin{itemize}
\item [1)] The nonsingular automorphisms $T_k$, $k\geq 2$ are AT.
\item [2)] All these automorphisms have the same eigenvalues as the dyadic odometer and they are not measure theoretically conjugate, as $k$ varies
\end{itemize}
\end{theorem}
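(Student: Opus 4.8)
The plan is to prove the two statements separately; part (1) is essentially the preceding Proposition, while the real work in part (2) is the evaluation of Handelman's invariant.

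\emph{Proof of (1).} By Theorem~\ref{reteta}, $(X,\mu,T_k)$ is measure--theoretically conjugate to the Poisson boundary of the dimension space $H_k$ associated with the circulant matrices $M_n=\tfrac12\bigl(I+x^{2^n}P\bigr)$, where $P$ is the standard $k\times k$ cyclic permutation matrix. By the reformulation of the AT property in \cite{GH}, the Poisson boundary of $H_k$ is an AT $\mathbb Z$--action precisely when $H_k$ is an AT dimension space, so it suffices to prove the latter, which is the preceding Proposition. Although that Proposition is written for $k=4$, the argument is uniform in $k$: products of the $M_n$ remain circulant of size $k$, their entries are linear combinations of monomials sorted by the residue of $\#P$ modulo $k$, the exponents $2^i$ grow fast enough that the basic monomials occurring in a block $\prod_{i=8Mj}^{8Mj+4M}M_i$ are pairwise distinct, and the column--row approximation by $(\varphi_p)$ and $(g_p)$, together with the Chebyshev concentration of $K$ near $2^{4M}$ and the estimate $S(K)/S(2^{4M})\to1$, carry over verbatim after replacing every $4$ by $k$ (so ``mod $4$'' becomes ``mod $k$''). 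The case $k=2$ recovers the proof of \cite{DQ} that the Morse automorphism is AT. Hence each $T_k$ is AT.

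\emph{Proof of (2), eigenvalues.} Let $\Lambda=\{e^{2\pi i m/2^n}:m\in\mathbb Z,\ n\ge0\}$ be the eigenvalue group of the dyadic odometer $O$; I claim $\widehat{T_k}=\Lambda$ for all $k\ge2$. First, $O$ is a factor of $T_k$, so $\Lambda\subseteq\widehat{T_k}$: on dimension spaces the coordinate--sum map $\Sigma\colon A^{k}\to A$, $\Sigma(f_1,\dots,f_k)^{T}=f_1+\cdots+f_k$, is positive and $\mathbb Z$--equivariant, and since each column of $M_n$ has exactly one entry $\tfrac12$ and one entry $\tfrac12x^{2^n}$ one has $\Sigma M_n=P_n\Sigma$ for all $n\ge1$, where $P_n=\tfrac12(1+x^{2^n})$ is the dimension--space data of $O$; thus $\Sigma$ is a surjective morphism between the tails of the two directed systems, and since the Poisson boundary depends only on the tail it induces the factor map $(X,\mu,T_k)\to O$. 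Concretely this map collapses the $k$ vertices of each level of the Bratteli diagram of $T_k$, its fibres are the $k$ choices of starting vertex, and one reads off that $T_k$ is the $\mathbb Z/k$--extension of $O$ by the cocycle $\sigma_k$ equal, up to an additive constant and modulo $k$, to the carry length of the odometer step. For the reverse inclusion, decompose $L^2(T_k)$ under the $\mathbb Z/k$--action: an eigenfunction may be taken of the form $f(x,j)=\chi(j)g(x)$, and it has eigenvalue $\mu$ iff $g\circ O=\mu\,\overline{\chi\circ\sigma_k}\,g$. For $\chi$ trivial this forces $\mu\in\widehat O=\Lambda$; for $\chi$ non--trivial it forces $\chi\circ\sigma_k$ to be cohomologous to a constant over $O$, which is impossible because $\sum_{i=0}^{2^n-1}\sigma_k\circ O^i$ equals $-1$ on a set of measure $\tfrac12$ for every $n$, while $h\circ O^{2^n}/h\to1$ in measure and a root of unity (such as $\mu^{2^n}$, since $\mu^k\in\widehat O$) can converge only by being eventually $1$. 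For $k=2$ this is exactly the classical eigenvalue computation for the Morse automorphism, and it is identical for all $k$. Hence $\widehat{T_k}=\Lambda$ for every $k\ge2$: the $T_k$ all have the same eigenvalues as the dyadic odometer, which is their common Kronecker factor.

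\emph{Proof of (2), non--isomorphism.} Since their eigenvalue groups coincide, the $T_k$ cannot be separated spectrally, and I would distinguish them by the invariant $s$ of Handelman \cite{H2}. The plan is to evaluate $s$ directly on the explicit dimension space $H_k$ --- that is, on the sequence $M_n=\tfrac12(I+x^{2^n}P)$ --- and to show that $s(T_k)$ detects the order $k$ of the circulant permutation $P$ (equivalently the degree of $T_k$ over its Kronecker factor), so that it takes pairwise distinct values; as $s$ is a measure--theoretic conjugacy invariant, this gives that the $T_k$ are pairwise non--conjugate and finishes the proof. The main obstacle, and the bulk of the work, is precisely this evaluation of $s(H_k)$: one must unwind the definition of $s$ for the matrix sequence $M_n$ and control the resulting quantities in the inductive limit, and --- as in the AT argument --- it is the fast growth of the exponents $2^n$ that should make the computation tractable.
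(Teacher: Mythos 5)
Your overall route coincides with the paper's, which disposes of the theorem in two sentences: part 1) is quoted from the preceding Proposition (the paper itself remarks beforehand that the argument ``follows the same idea for all $k$'' and is only written out for $k=4$), and part 2) is attributed entirely to \cite{H2}. Your part (1) is therefore exactly the paper's argument, and your justification that the $k=4$ proof carries over after replacing ``mod $4$'' by ``mod $k$'' is the justification the paper implicitly relies on. Your eigenvalue discussion is genuine added value: the paper gives no argument at all for the claim that the $T_k$ share the eigenvalues of the dyadic odometer, whereas your coordinate-sum intertwining $\Sigma M_n=P_n\Sigma$ (valid since each column of $\tfrac12(I+x^{2^n}P)$ sums to $\tfrac12(1+x^{2^n})$) cleanly exhibits the odometer as a factor, and the $\mathbb{Z}/k$-extension analysis generalizing the classical Morse computation is the right way to rule out further eigenvalues, though the specific claim about the Birkhoff sums of the cocycle $\sigma_k$ would need to be checked against the actual order structure of the diagram.

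The one point to be candid about is the non-isomorphism step: you explicitly leave the evaluation of Handelman's invariant $s$ on $H_k$ as a plan, and as a standalone proof that is a gap --- nothing in your text actually shows $s(T_k)\neq s(T_{k'})$ for $k\neq k'$. That said, the paper does exactly the same thing: its entire proof of part 2) is ``follows from \cite{H2}'', a preprint devoted precisely to these circulant-matrix dimension spaces. So you have correctly identified the intended tool and the intended input, and your proposal is faithful to the paper's argument; it is just that neither your write-up nor the paper's is self-contained at this point, and if a complete proof were required you would indeed have to carry out the computation of $s(H_k)$ that you defer.
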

\begin{proof}
The first part of the theorem results from the above proposition. The second part follows from \cite{H2}.
\end{proof}

\end{document}